\documentclass[manuscript]{biometrika}

\usepackage{times}
\usepackage{bm}
\usepackage{natbib}
\usepackage{amssymb}
\usepackage{amsmath}
\usepackage{color}


\def\nn{{\nonumber}}

\def\Ind{{1\!\mathrm{l}}}
\DeclareMathOperator{\Tr}{tr}
\DeclareMathOperator{\diag}{diag}
\newcommand{\eig}{\mathrm{eig}}

\newcommand{\natz}{\mathbb{N}_0}
\newcommand{\inte}{\mathbb{Z}}

\newcommand{\diam}{{\rm diam}}

\newcommand{\pr}{\mathrm{pr}}

\renewcommand{\S}{\mathcal{S}}
\renewcommand{\O}{\mathcal{O}}

\newcommand{\scQ}{\mathcal{Q}}

\newcommand{\alphamax}{\alpha_{\max}}

\newcommand{\dstar}{d^*}
\begin{document}



\markboth{W. Shen, S.T. Tokdar \and S. Ghosal}{Bayesian density estimation}

\title{Adaptive Bayesian multivariate density estimation with Dirichlet mixtures}

\author{Weining Shen}
\affil{Department of Statistics, North Carolina State University, 5109 SAS Hall, Campus Box 8203, Raleigh, North Carolina 27695, USA \email{wshen2@ncsu.edu} }
\author{Surya T. Tokdar}
\affil{Department of Statistical Science, Duke University, 219A Old Chemistry Building, Campus Box 90251, Durham, North Carolina 27708, USA
\email{tokdar@stat.duke.edu} }
\author{\and Subhashis Ghosal}
\affil{Department of Statistics, North Carolina State University, 5109 SAS Hall, Campus Box 8203, Raleigh, North Carolina 27695, USA \email{sghosal@ncsu.edu} }
\maketitle

\begin{abstract}
We show that rate-adaptive multivariate density estimation can be performed using Bayesian methods based on Dirichlet mixtures of normal
kernels with a prior distribution on the kernel's covariance matrix parameter.
We derive sufficient conditions on the prior specification that guarantee convergence to
a true density at a rate that is optimal minimax for the smoothness class to which the true density belongs.
No prior knowledge of smoothness is assumed.
The sufficient conditions are shown to hold for the Dirichlet location mixture of normals prior with a Gaussian base measure and
an inverse-Wishart prior on the covariance matrix parameter. Locally H\"older smoothness classes and their anisotropic extensions are considered.
Our study involves several technical novelties,
including sharp approximation of finitely differentiable multivariate densities by normal mixtures and a new sieve on the space of such densities.
\end{abstract}

\begin{keywords}
Anisotropy; Dirichlet mixture; Multivariate density estimation; Nonparametric Bayesian methods; Rate adaptation.
\end{keywords}

\section {Introduction}
\label{sec1}
Asymptotic frequentist properties of Bayesian non-parametric methods have recently received much attention. It is now recognized that a single fully Bayesian method can offer adaptive optimal rates of convergence for large collections of true data generating distributions ranging over several smoothness classes. Examples include signal estimation in the presence of Gaussian white noise \citep{Belitser2003}, density estimation and regression based on a mixture model of spline or wavelet bases \citep{Huang2004, Ghosal2008}, regression, classification and density estimation based on a rescaled Gaussian process model \citep{Vander2009}, density estimation based on a hierarchical finite mixture model of beta densities \citep{Rousseau2010}, density estimation \citep{Kruijer2010} and regression \citep{Jonge2010} based on hierarchical, finite mixture models of location-scale kernels.

Adaptive convergence rates results for nonparametric Bayesian methods are useful for at least two reasons.
First, they provide frequentist justification of these methods in large samples,
which can be attractive to non-Bayesian practitioners who use these methods because they are easy to implement,
provide estimation and prediction intervals, do not require adjusting tuning parameters and can handle multivariate data.
Second, these results are an indirect validation that the spread of the underlying prior distribution is well balanced across its infinite dimensional support.
Such a prior distribution quantifies the rate at which it packs mass into a sequence of shrinking neighborhoods around any given point in its support.
When the support of the prior can be partitioned into smoothness classes in the space of continuous functions,
a sharp bound for this rate can be calculated for all support points within each smoothness class.
These calculations have a nearly one-to-one relationship with the asymptotic convergence rates of the resulting method.

In this article we focus on a collection of nonparametric Bayesian density estimation methods based on Dirichlet process mixture of normals priors.
Dirichlet process mixture priors \citep{Ferguson19832, Lo1984} form a cornerstone of nonparametric Bayesian methodology
\citep{Escobar1995, Muller1996, Muller2004, Dunson20101} and density estimation methods based on these priors
are among the first Bayesian nonparametric methods for which convergence results were obtained \citep{Ghosal1999, Ghosal20011, Tokdar2006}.
However, due to two major technical difficulties, rate adaptation results have not yet been available
and convergence rates remain unknown beyond univariate density estimation \citep{Ghosal20011,Ghosal2007}.
The first major difficulty lies in showing adaptive prior concentration rates for mixture priors on density functions.
Taylor expansions do not suffice because of the non-negativity constraint on the densities.
The second major difficulty is to construct a suitable low-entropy, high-mass sieve on the space of infinite component mixture densities.
Such sieve constructions are an integral part of current technical machinery for deriving rates of convergence.
The sieves that have been used to study Dirichlet process mixture models \citep[e.g., in][]{Ghosal2007} do not scale to higher
dimensions and lack the ability to adapt to smoothness classes \citep{Wu2010}.

We plug these two gaps and establish rate adaptation properties of a collection of multivariate density estimation methods based on
Dirichlet process mixture of normals priors.
Our priors include the commonly used specification of mixing over multivariate normal kernels with a location parameter drawn from a Dirichlet process with a Gaussian base measure while using an inverse-Wishart prior on the common covariance matrix parameter of the kernels. Rate adaptation is established with respect to H\"older smoothness classes. In particular, when any density estimation method from our collection is applied to independent observations $X_1, \ldots, X_n \in \mathbb{R}^d$ drawn from a density $f_0$ which belongs to the smoothness class of locally $\beta$-H\"older functions, it is shown to produce a posterior distribution on the unknown density of $X_i$'s that converges to $f_0$ at a rate $n^{-\beta / (2\beta + d)}(\log n)^t$, where $t$ depends on $\beta$, $d$ and tail properties of $f_0$. This rate, without the $(\log n)^t$ term, is minimax optimal for the $\beta$-H\"older class \citep{Barron1999}. It is further shown that if $f_0$ is anisotropic with H\"older smoothness coefficients $\beta_1, \ldots, \beta_d$ along the $d$ axes, then the posterior convergence rate is $n^{-\beta_0 / (2\beta_0 + d)}$ times a $\log n$ factor, where $\beta_0$ is the harmonic mean of $\beta_1, \ldots, \beta_d$. Again this rate is minimax optimal for this class of functions \citep{Hoffmann2002}.

To the best of our knowledge, such rate adaptation results are new for any kernel based multivariate density estimation method.
The performance of a non-Bayesian, multivariate kernel density estimator depends heavily on the difficult
choice of a bandwidth and a smoothing kernel \citep{Scott1992}. Optimal rates are possible only by using higher order kernels and
the choices of bandwidth that require knowing the smoothness level. In contrast our results show that a single Bayesian nonparametric method based on a single choice of Dirichlet process mixture of normal kernels achieves optimal convergence rates universally across all smoothness levels.

\section{Posterior Convergence Rates for Dirichlet Mixtures}
\label{sec2}
\subsection{Notation}
\label{subsec2.1}

For any $d\times d$ positive definite real matrix $\Sigma$, let $\phi_{\Sigma}(x)$ denote the $d$-variate normal density $(2\pi)^{-d/2}(\det \Sigma)^{-1/2}\exp(-x^T\Sigma^{-1}x / 2)$ with mean zero and covariance matrix $\Sigma$. For a probability measure $F$ on $\mathbb{R}^d$ and a $d\times d$ positive definite real matrix $\Sigma$, the $F$ induced location mixture of $\phi_\Sigma$ is denoted by $p_{F,\Sigma}$, i.e., $p_{F, \Sigma}(x) = \int \phi_\Sigma(x - z) F(dz)$ $(x \in \mathbb{R}^d)$. For a scalar $\sigma > 0$ and any function $f$ on $\mathbb{R}^d$, we let $K_\sigma f$ denote the convolution of $f$ and $\phi_{\sigma^2 I}$, i.e., $(K_\sigma f)(x) = \int \phi_{\sigma^2 I}(x - z) f(z)dz$.

For any finite positive measure $\alpha$ on $\mathbb{R}^d$, let $\mathcal{D}_\alpha$ denote the Dirichlet process distribution with parameter $\alpha$ \citep{Ferguson1973}. That is, an $F \sim \mathcal{D}_\alpha$ is a random probability measure on $\mathbb{R}^d$, such that for any Borel measurable partition $B_1, \ldots, B_k$ of $\mathbb{R}^d$ the joint distribution of $F(B_1), \ldots, F(B_k)$ is the $k$-variate Dirichlet distribution with parameters $\alpha(B_1), \ldots, \alpha(B_k)$.

Let $\natz = \{0,1,2,\ldots\}$ and let $\Delta_J = \{(x_1,\ldots,x_J): x_i > 0, i=1,\ldots,J; \sum_{i=1}^J x_i =1\}$ denote the $J$-dimensional probability simplex. Let the indicator function of a set $A$ be denoted by $\Ind(A)$.
We use $\lesssim$ to denote an inequality up to a constant multiple, where the underlying
constant of proportionality is universal or is unimportant for our purposes.
For any $x \in \mathbb{R}$, define $\lfloor x\rfloor$ as the largest integer that is strictly smaller than $x$.
Similarly, define $\lceil x \rceil $ as the smallest integer strictly greater than $x$.
For a multi-index $k = (k_1, \ldots, k_d) \in \natz^d$, define $k_\cdot = k_1 + \cdots + k_d$, $k! = k_1 ! \cdots k_d!$
and let $D^k$ denote the mixed partial derivative operator $\partial^{k_\cdot} / \partial x_1^{k_1}\cdots \partial x_d^{k_d}$.

For any $\beta > 0$, $\tau_0 \geq 0$ and any non-negative function $L$ on $\mathbb{R}^d$, define the locally $\beta$-H\"older class with envelope $L$, denoted $\mathcal{C}^{\beta, L, \tau_0}(\mathbb{R}^d)$, to be the set of all functions $f: \mathbb{R}^d \to \mathbb{R}$ with finite mixed partial derivatives $D^k f$  $(k \in \natz^d)$ of all orders up to $k_\cdot \le \lfloor \beta \rfloor$, and for every $k \in \natz^d$ with $k_\cdot = \lfloor\beta\rfloor$ satisfying
\begin{equation}\label{smoothness}
|(D^kf)(x + y) - (D^kf)(x)| \le L(x)e^{\tau_0 \|y\|^2} \|y \|^{\beta - \lfloor\beta\rfloor},~~~~x, y \in \mathbb{R}^d.
\end{equation}
In our discussion, we shall assume that the true density $f$ lies in $\mathcal{C}^{\beta, L, \tau_0}(\mathbb{R}^d)$.
This condition is essentially weaker than the one in \citet{Kruijer2010},
where $\log f \in \mathcal{C}^{\beta, L, 0}(\mathbb{R})$ is assumed, see Lemma \ref{rm1}.

For any $d\times d$ matrix $A$, we denote its eigenvalues by $\eig_1(A) \leq \cdots \leq \eig_d(A)$, its spectral norm by $\|A\|_2 = \sup_{x \ne 0} \|Ax\| / \|x\|$ and its max norm by $\|A\|_{\max}$, the maximum of the absolute values of the elements of $A$.

\subsection {Dirichlet process mixture of normals prior}
\label{sec: prior}

Consider drawing inference on an unknown probability density function $f$ on $\mathbb{R}^d$ based on independent observations $X_1, \ldots, X_n$ from $f$. A nonparametric Bayesian method assigns a prior distribution $\Pi$ on $f$ and draws inference on $f$ based on the posterior distribution $\Pi_n(\cdot \mid X_1, \ldots, X_n)$. A Dirichlet process location mixture of normals prior $\Pi$ is the distribution of a random probability density function $p_{F,\Sigma}$, where $F \sim \mathcal{D}_\alpha$ for some finite positive measure $\alpha$ on $\mathbb{R}^d$ and $\Sigma \sim G$, a probability distribution on $d\times d$ positive definite real matrices.

We restrict our discussion to a collection of such prior distributions $\Pi$ for which the associated $\mathcal{D}_\alpha$ and $G$ satisfy the following conditions.
Let $|\alpha| = \alpha(\mathbb{R}^d)$ and $\bar{\alpha}= \alpha/|\alpha|$. We assume that $\bar \alpha$
has a positive density function on the whole of $\mathbb{R}^d$ and that there exist positive constants $a_1, a_2, a_3, b_1, b_2, b_3, C_1, C_2$
such that
\begin{align}
1 - \bar{\alpha} ([-x,x]^d) & \leq b_1\exp(-C_1 x^{a_1})~~\mbox{for all sufficiently large}~x > 0,\label{eq:dp1}\\
G\{\Sigma: \eig_d (\Sigma^{-1}) \geq x\} & \leq b_2\exp (-C_2 x^{a_2})~~\mbox{for all sufficiently large}~x>0, \label{eq:iw1}\\
G\{\Sigma: \eig_1 (\Sigma^{-1}) < x\} & \leq b_3 x^{a_3}~~\mbox{for all sufficiently small}~x > 0,  \label{eq:iw2}
 \end{align}
and that there exist $\kappa, a_4, a_5, b_4, C_3>0$ such that for any $0 < s_1 \le \cdots \le s_d$ and $t \in (0, 1)$,
\begin{equation}
 G\{\Sigma: s_j < \eig_j(\Sigma^{-1}) < s_j(1 + t), j = 1,\ldots,d\}
 \ge b_4 s_1^{a_4}t^{a_5}\exp(-C_3 s_d^{\kappa/2}).
  \label{eq:iw3}
  \end{equation}

Our assumption on $\bar{\alpha}$ is analogous to (11) of \citet{Kruijer2010} and holds, for example, when $\bar \alpha$ is a Gaussian measure on $\mathbb{R}^d$. Unlike previous treatments of Dirichlet process mixture models \citep{Ghosal20011,Ghosal2007}, we allow a full support prior on $\Sigma$ including the widely used inverse-Wishart distribution. The following lemma shows that such a $G$ satisfies our assumptions; see Appendix A for a proof.

\begin{lemma}
\label{lem:inw}
The inverse-Wishart distribution $\text{IW}(\nu,\Psi)$ with $\nu$ degrees of freedom and a positive definite scale matrix $\Psi$ satisfies \eqref{eq:iw1}, \eqref{eq:iw2} and \eqref{eq:iw3} with $\kappa = 2$.
\end{lemma}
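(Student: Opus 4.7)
The plan is to recast the problem in terms of $\Omega = \Sigma^{-1}$, which under $\Sigma \sim \mathrm{IW}(\nu,\Psi)$ follows a Wishart distribution $W(\nu,\Psi^{-1})$ with density proportional to $(\det \Omega)^{(\nu-d-1)/2}\exp\{-\Tr(\Psi\Omega)/2\}$ on the cone of $d\times d$ symmetric positive definite matrices. In this parametrization, \eqref{eq:iw1}--\eqref{eq:iw3} become, respectively, an upper tail bound for $\eig_d(\Omega)$, a small-ball bound at zero for $\eig_1(\Omega)$, and a simultaneous small-ball bound for the entire spectrum of $\Omega$.

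For \eqref{eq:iw1}, I would use $\eig_d(\Omega)\leq \Tr(\Omega) \leq \Tr(\Psi\Omega)/\eig_1(\Psi)$ together with the representation $\Omega = \sum_{i=1}^\nu Z_i Z_i^{\T}$ with $Z_i \sim N(0,\Psi^{-1})$ independent, from which $\Tr(\Psi\Omega)=\sum_i Z_i^{\T}\Psi Z_i \sim \chi^2_{\nu d}$; the standard chi-squared tail bound then yields \eqref{eq:iw1} with $a_2=1$. For \eqref{eq:iw2}, I would set $\tilde\Omega = \Psi^{1/2}\Omega\Psi^{1/2}\sim W(\nu,I)$ and note $\eig_1(\tilde\Omega)\leq \eig_d(\Psi)\,\eig_1(\Omega)$, so it suffices to bound $\mathrm{pr}\{\eig_1(\tilde\Omega)<y\}$. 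Using the classical joint eigenvalue density of a standard Wishart, proportional to $\prod_j \mu_j^{(\nu-d-1)/2}e^{-\mu_j/2}\prod_{i<j}(\mu_j-\mu_i)$ on $0<\mu_1<\cdots<\mu_d$, and integrating out $\mu_2,\ldots,\mu_d$ over a bounded region, the marginal density of $\mu_1$ near zero is of order $\mu_1^{(\nu-d-1)/2}$, yielding $\mathrm{pr}\{\eig_1(\tilde\Omega)<y\}\lesssim y^{(\nu-d+1)/2}$ and hence \eqref{eq:iw2}.

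The main obstacle is \eqref{eq:iw3}, where the lower bound must track $s_1$, $t$, and $s_d$ simultaneously with the correct exponent $\kappa/2=1$ on $s_d$. Rather than work with the joint eigenvalue density and wrestle with the Vandermonde factor when some $s_j$'s coincide or nearly coincide, I would integrate the Wishart density directly over a spectral-norm ball in the cone of symmetric matrices. Set $\Omega_0 = \diag\{s_1(1+t/2),\ldots,s_d(1+t/2)\}$ and $B = \{\Omega\succ 0 : \|\Omega-\Omega_0\|_2\leq s_1 t/2\}$. Weyl's inequality gives $|\eig_j(\Omega)-s_j(1+t/2)|\leq s_1 t/2 \leq s_j t/2$ for every $\Omega\in B$ and every $j$, since $s_j\geq s_1$, so automatically $\eig_j(\Omega)\in(s_j,s_j(1+t))$. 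On $B$, one has $\det\Omega \asymp \prod_j s_j$ with constants depending only on $d$, and $\Tr(\Psi\Omega)\leq \eig_d(\Psi)\sum_j \eig_j(\Omega)\leq C' s_d$ with $C'$ depending on $d$ and $\Psi$; thus the Wishart density is bounded below on $B$ by a constant times $\prod_j s_j^{(\nu-d-1)/2}\exp(-C' s_d)$.

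The Lebesgue volume of $B$ in the $d(d+1)/2$-dimensional space of symmetric matrices is at least a dimension-dependent constant times $(s_1 t)^{d(d+1)/2}$, since the spectral norm is equivalent to the Frobenius norm up to factors of $d$. Multiplying this volume by the density lower bound and using $\prod_j s_j^{(\nu-d-1)/2}\gtrsim s_1^{d(\nu-d-1)/2}$ when $\nu\geq d+1$, or absorbing negative powers of $s_d$ into the exponential $\exp(-C_3 s_d)$ when $\nu<d+1$ (since $s_d^{-c}\exp(-C's_d)\gtrsim\exp(-(C'+c)s_d)$ for $s_d\geq 1$ and is bounded below by one for $s_d\leq 1$), delivers a bound of the form $b_4 s_1^{a_4}t^{a_5}\exp(-C_3 s_d)$ with $a_4 = d\nu/2$ and $a_5 = d(d+1)/2$, establishing \eqref{eq:iw3} with $\kappa=2$.
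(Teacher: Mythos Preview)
Your handling of \eqref{eq:iw1} and \eqref{eq:iw2} is essentially the paper's: the trace is $\chi^2_{\nu d}$ for the upper tail, and the Wishart joint eigenvalue density gives the small-ball bound on $\eig_1(\Omega)$ after integrating out the larger eigenvalues (the integration is over $(0,\infty)$ rather than a bounded region, but the exponential factors make this harmless).

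For \eqref{eq:iw3} you take a genuinely different route. The paper works with the joint eigenvalue density \eqref{eq:jpdf} and copes with the Vandermonde factor $\prod_{j<k}(x_k-x_j)$ by subdividing each target interval $(s_j,s_j(1+t))$ into staggered subintervals $I_j=(s_j\{1+(j-1/2)t/d\},\,s_j(1+jt/d))$; these are ordered even when some $s_j$ coincide, and on $\prod_j I_j$ one has $x_k-x_j>s_1t/(2d)$, giving a clean lower bound on the Vandermonde. You instead bypass the eigenvalue density entirely by integrating the Wishart density over a spectral-norm ball $B$ around $\Omega_0=\diag(s_j(1+t/2))$ and invoking Weyl's inequality to certify $\eig_j(\Omega)\in[s_j,s_j(1+t)]$ on $B$. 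This is correct: the ball lies entirely in the positive-definite cone since $\eig_1(\Omega)\ge s_1$ on $B$, the density is bounded below by $c(\prod_j s_j)^{(\nu-d-1)/2}\exp(-C's_d)$ there, and the Lebesgue volume in the $d(d+1)/2$-dimensional symmetric-matrix space is $\gtrsim (s_1t)^{d(d+1)/2}$. Your absorption of negative powers of $s_d$ into the exponential when $\nu<d+1$ is also valid. The advantage of your approach is that it sidesteps the Vandermonde factor altogether and handles ties among the $s_j$ without any case analysis; the paper's approach, by contrast, yields somewhat more explicit constants and stays within classical random-matrix calculations.
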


From a computational point of view, another useful specification is to consider a $G$ that
supports only diagonal covariance matrices $\Sigma = \diag(\sigma_1^2, \ldots, \sigma_d^2)$,
with each diagonal component independently assigned a prior distribution $G_0$.
By choosing an inverse gamma distribution as $G_0$, we get a $G$ that again satisfies \eqref{eq:iw1}, \eqref{eq:iw2} and \eqref{eq:iw3} with $\kappa = 2$.
Alternatively, we could take $G_0$ to be the distribution of the square of an inverse gamma random variable.
Such a $G_0$ leads to a $G$ that satisfies \eqref{eq:iw1}, \eqref{eq:iw2} and \eqref{eq:iw3} with $\kappa = 1$.
This difference in $\kappa$ matters, with smaller $\kappa$ leading to optimal convergence rates for a wider class of true densities.



\subsection{Convergence rates results}
Let $\Pi$ be a Dirichlet process mixture prior as defined in Section \ref{sec: prior} and let $\Pi_n(\cdot \mid X_1, \ldots, X_n)$ denote the posterior distribution based on $n$ observations $X_1, \ldots, X_n$ modeled as $X_i \sim f$, $f \sim \Pi$. Let $\left\{\epsilon_n\right\}_{n \ge 1}$ be a sequence of positive numbers with $\lim_{n \to \infty} \epsilon_n = 0$. Also let $\rho$ denote a suitable metric on the space of probability densities on $\mathbb{R}^d$, such as the $L_1$ metric $\|f - g\|_1 = \int |f(x) - g(x)|dx$ or the Hellinger metric $d_H(f, g) = [\int \{f^{1/2}(x) - g^{1/2}(x)\}^2 dx]^{1/2}$. Fix any probability density $f_0$ on $\mathbb{R}^d$. For the density estimation method based on $\Pi$ we say its posterior convergence rate at $f_0$ in the metric $\rho$ is $\epsilon_n$ if for any $M < \infty$
\begin{eqnarray}
\label{def rate}
\lim_{n \to 0} \Pi_n\left[\{f: \rho(f_0, f) > M \epsilon_n\} | X_1, \ldots, X_n\right] = 0~~\mbox{almost surely},
\end{eqnarray}
whenever $X_1, X_2,\ldots$ are independent and identically distributed with density $f_0$.

Although \eqref{def rate} only establishes $(\epsilon_n)_{n \ge 1}$ as a bound on the convergence rate at $f_0$, it serves as a useful calibration when checked against the optimal rate for the smoothness class to which $f_0$ belongs. It is known that the minimax rate associated with a $\beta$-H\"older class is $n^{-\beta / (2\beta + d)}$. We establish \eqref{def rate} for this class with $\epsilon_n$ as $n^{-\beta / (2\beta + d)}$ up to a factor a power of $\log n$. A formal result requires some additional conditions on $f_0$, as summarized in the theorem below.

\begin{theorem}\label{thm:x2}
Suppose that $f_0 \in \mathcal{C}^{\beta, L, \tau_0}(\mathbb{R}^d)$ is a probability density function satisfying
\begin{equation}
P_0\left(|D^kf_0|/f_0\right)^{{(2\beta + \epsilon)}/{k_\cdot}} < \infty,~~k\in \natz^d, k_\cdot \le \lfloor\beta\rfloor,~~~~P_0\left( L/f_0\right)^{{(2\beta + \epsilon)}/{\beta}}  < \infty
\label{eq:int}
\end{equation}
for some $\epsilon > 0$ where $P_0g = \int g(x)f(x)dx$ denotes expectation of $g(X)$ under $X \sim f_0$. Also suppose there are positive constants $a, b, c, \tau$ such that
\begin{equation}
f_0(x) \le c \exp(-b\|x\|^\tau),~~\|x\| > a.
\label{eq:tail}
\end{equation}
 For the prior $\Pi$ constructed in Section \ref{sec: prior}, \eqref{def rate} holds in the Hellinger or the $L_1$ metric with $\epsilon_n = n^{-\beta / (2\beta + \dstar)} (\log n)^t$, where $t > \{ \dstar (1 + 1/\tau + 1 / \beta) + 1\} /(2 + \dstar/\beta)$ and $\dstar = \max(d, \kappa)$.
\end{theorem}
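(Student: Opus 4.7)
The plan is to verify the three conditions of the general posterior contraction theorem of \citet{Ghosal20011}, tailored to mixture priors: (i) a prior concentration bound $\Pi\{B_n(f_0,\epsilon_n)\} \geq \exp(-c\,n \epsilon_n^2)$ on the standard Kullback--Leibler type neighborhood of $f_0$; (ii) existence of sieves $\mathcal{F}_n$ whose Hellinger metric entropy satisfies $\log N(\epsilon_n, \mathcal{F}_n, d_H) \lesssim n\epsilon_n^2$; and (iii) a complement bound $\Pi(\mathcal{F}_n^c) \leq \exp\{-(c+4) n\epsilon_n^2\}$. With $\epsilon_n = n^{-\beta/(2\beta+\dstar)} (\log n)^t$ and $t$ as in the statement, these together imply \eqref{def rate} in the Hellinger and hence in the $L_1$ metric.

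For (i) I would fix $\sigma_n$ of the order $n^{-1/(2\beta+\dstar)}$ up to logarithmic factors. The main technical step, and the paper's central contribution, is to construct a discrete probability measure $F^\star = F^\star_{\sigma_n}$ supported on a box $[-a_n, a_n]^d$ with $a_n \asymp (\log n)^{1/\tau}$ and $N_{\sigma_n} \asymp \sigma_n^{-\dstar}(\log n)^{d+d/\tau}$ atoms, such that the anchor density $f^\star = p_{F^\star, \sigma_n^2 I}$ satisfies $P_0\log(f_0/f^\star) \lesssim \sigma_n^{2\beta}$ together with an analogous bound on $P_0\{\log(f_0/f^\star)\}^2$. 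Because $\beta$ may exceed $2$, the plain convolution $K_{\sigma_n} f_0$ only achieves error $\sigma_n^2$, so I would twist $f_0$ by a linear combination of its mixed partial derivatives $D^k f_0$ for $k_\cdot \leq \lfloor \beta\rfloor$, with coefficients chosen to annihilate the leading Gaussian moments, and then truncate, discretise and renormalise into a probability measure on the $\sigma_n$-grid inside $[-a_n, a_n]^d$. The integrability hypotheses \eqref{eq:int} are exactly what is needed to upgrade the resulting pointwise remainder to the required Kullback--Leibler and square-log control, while the tail hypothesis \eqref{eq:tail} handles the truncation. Given $f^\star$, a standard Dirichlet small-ball estimate together with \eqref{eq:dp1} produces DP mass of order $\exp(-c N_{\sigma_n} \log n)$ on mixing measures close enough to $F^\star$ to keep $\|p_{F, \sigma_n^2 I} - f^\star\|_1 \lesssim \sigma_n^\beta$, and \eqref{eq:iw3} evaluated at $s_1=\cdots=s_d=\sigma_n^{-2}$ with window $t\asymp \sigma_n^{2\beta}$ produces a lower bound $\exp(-C_3 \sigma_n^{-\kappa})$ on the covariance side; balancing both against $n\epsilon_n^2$ is what forces the exponent $\dstar = \max(d,\kappa)$.

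For (ii) and (iii) I would use a sieve of the form
\begin{equation*}
\mathcal{F}_n = \Bigl\{\, p_{F,\Sigma} :\ F\bigl([-A_n, A_n]^d\bigr) \geq 1 - \eta_n,\ \underline{\sigma}_n \leq \eig_1(\Sigma)^{1/2},\ \eig_d(\Sigma)^{1/2} \leq \overline{\sigma}_n \,\Bigr\},
\end{equation*}
further restricted to mixtures whose first $H_n$ stick-breaking components carry all but $\eta_n$ of the mass, with $A_n$ polylogarithmic in $n$, $\underline{\sigma}_n^{-1}$ and $\overline{\sigma}_n$ polynomial in $n$, $H_n \asymp n\epsilon_n^2/\log n$, and $\eta_n = \exp(-c'\, n\epsilon_n^2)$. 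Collapsing the negligible stick-breaking tail into a single component, standard covering arguments for finite location--scale Gaussian mixtures yield $\log N(\epsilon_n, \mathcal{F}_n, d_H) \lesssim H_n\{\log A_n + \log(\overline{\sigma}_n/\underline{\sigma}_n) + d^2 \log(1/\epsilon_n)\}$, which is of order $n\epsilon_n^2$. The complement probability is controlled by a union bound: \eqref{eq:dp1} handles the location-truncation part, a standard DP stick-breaking tail estimate handles the $H_n$ part, and \eqref{eq:iw1} and \eqref{eq:iw2} handle the two eigenvalue constraints.

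The hard part is the sharp multivariate approximation in step (i). Unlike \citet{Kruijer2010}, I cannot exponentiate a smooth approximation of $\log f_0$, since smoothness is imposed directly on $f_0$; for $\beta>2$ the non-negativity constraint on the final mixing measure collides with the need to cancel all mixed Gaussian moments up to order $\lfloor\beta\rfloor$, and routing this through the density level in arbitrary dimension $d$ is what the envelope and integrability conditions \eqref{eq:int} and \eqref{eq:tail} are designed to enable. A secondary obstacle is the sieve: a naive stratification of the full $d\times d$ covariance parameter loses an extra $\log n$ factor in the entropy, so the eigenvalue windows $[\underline{\sigma}_n, \overline{\sigma}_n]$ and the stick-breaking cutoff $H_n$ must be chosen in tight coordination with $\sigma_n$ and with the exponents $a_2, a_3, \kappa$ appearing in \eqref{eq:iw1}--\eqref{eq:iw3}; this coordination is also the source of the $\dstar = \max(d,\kappa)$ rather than $d$ in the final rate.
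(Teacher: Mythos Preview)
Your overall strategy matches the paper's: verify the Ghosal--van der Vaart conditions via (i) a prior-thickness bound built on a derivative-corrected Gaussian-mixture approximation of $f_0$, and (ii)--(iii) a stick-breaking sieve. Step~(i) is essentially right, including the twist by a linear combination of $D^kf_0$ to cancel Gaussian moments (the paper's transform $T_{\beta,\sigma}$), the compact truncation via \eqref{eq:tail}, and the identification of $\dstar = \max(d,\kappa)$ as arising from balancing the Dirichlet small-ball cost $\exp\{-cN\log n\}$, $N \asymp \sigma^{-d}$, against the covariance lower bound $\exp(-C\sigma^{-\kappa})$ from \eqref{eq:iw3}. (One slip: the atom count is of order $\sigma_n^{-d}$ up to logs, not $\sigma_n^{-\dstar}$; the $\kappa$ enters only through $G$.)

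There is, however, a genuine gap in your sieve. You take $\overline\sigma_n$ polynomial in $n$, but \eqref{eq:iw2} gives only a \emph{polynomial} tail $G\{\eig_1(\Sigma^{-1}) < x\} \le b_3 x^{a_3}$, so $G\{\eig_d(\Sigma) > \overline\sigma_n^{\,2}\}$ is only polynomially small and can never meet the required bound $\exp\{-(c_2+4)n\tilde\epsilon_n^{\,2}\}$. You therefore need $\overline\sigma_n$ exponentially large in $n\epsilon_n^2$, and then your entropy estimate with $\log(\overline\sigma_n/\underline\sigma_n)$ inside the $H_n$-bracket blows up. The paper resolves this tension with a multiplicative eigenvalue grid $\sigma_0^2(1+\epsilon_n^2/d)^m$, $m = 0,\ldots,M$, $M=n$: the upper endpoint is large enough that the complement is $\lesssim e^{-cn\epsilon_n^2}$, while the covariance contribution to the entropy is only $d\log M + cM\epsilon_n^2 \lesssim n\epsilon_n^2$, decoupled from $H_n$ (the $M\epsilon_n^2$ term pays for covering the orthogonal-matrix factor at precision scaling with the eigenvalue ratio). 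This construction is one of the paper's advertised novelties (Proposition~\ref{basic sieve}) and is not delivered by standard covering arguments. Two smaller issues: your choice $\eta_n = \exp(-c'n\epsilon_n^2)$ is incompatible with $H_n \asymp n\epsilon_n^2/\log n$, since the stick-breaking tail bound $\{(e|\alpha|/H_n)\log(1/\eta_n)\}^{H_n}$ then has base of order $\log n$ and diverges --- the paper simply takes $\eta_n = \epsilon_n$; and your last paragraph misattributes $\dstar$ partly to the sieve, whereas the sieve of Theorem~\ref{thm:sieve} is rate-agnostic (valid for any $\gamma\in(0,1/2)$) and $\dstar$ is fixed entirely by step~(i), as you correctly had it there.
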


We prove this result by verifying a set of sufficient conditions presented originally in \citet{Ghosal2000} and subsequently modified by \citet{Ghosal2007}. For any subset $A$ of a metric space equipped with a metric $\rho$ and an $\epsilon > 0$, let $N(\epsilon, A, \rho)$ denote the $\epsilon$-covering number of $A$, i.e., $N(\epsilon, A, \rho)$ is the smallest number of balls of radius $\epsilon$ needed to cover $A$. The logarithm of this number is referred to the $\epsilon$-entropy of $A$. Also define $\mathcal{K}(f_0, \epsilon)= \{f: \int f_0 \log (f_0/f) < \epsilon^2, \, \, \int f_0 \log^2 (f_0/f) < \epsilon^2 \, \}$ as the Kullback--Leibler ball around $f_0$ of size $\epsilon$. \citet{Ghosal2007} show that \eqref{def rate} holds whenever there exist positive constants $c_1, c_2, c_3, c_4$, a sequence of positive numbers $(\tilde \epsilon_n)_{n \ge 1}$ with $\tilde \epsilon_n \le \epsilon_n$ and $\lim_{n\to \infty} n\tilde\epsilon_n^2 = \infty$ and a sequence of compact subsets $(\mathcal{F}_n)_{n \ge 1}$ of probability densities satisfying,
\begin{eqnarray}
& \log N(\epsilon_n, \mathcal{F}_{n},\rho) \leq c_1 n\epsilon_n^2, \label{eq:e28} \\
& \Pi ( \mathcal{F}_n ^c ) \leq c_3 e^{-(c_2 + 4)n \tilde \epsilon_n^2 }, \label{eq:e30} \\
& \Pi\left\{ \mathcal{K}(f_0 , \tilde{\epsilon}_n ) \right\} \geq c_4 e^{- c_2 n \tilde{\epsilon}_n ^2 }. \label{eq:e29}
\end{eqnarray}
The sequence of sets $\mathcal{F}_n$ is often called a sieve and the Kullback--Leibler ball probability in \eqref{eq:e29} is called the prior thickness at $f_0$. In Theorem \ref{thm:thick} we show that \eqref{eq:e29} holds for $\Pi = \mathcal{D}_\alpha \times G$ with $\tilde \epsilon_n = n^{-\beta / (2\beta + \dstar)}(\log n)^{t_0}$ where $t_0 = \{\dstar(1 + 1 / \tau + 1 / \beta) + 1\} / (2 + \dstar / \beta)$. In Theorem \ref{thm:sieve} we show that \eqref{eq:e28}, \eqref{eq:e30} hold with $\tilde\epsilon_n$ as before and $\epsilon_n = n^{-\beta / (2\beta + \dstar)}(\log n)^t$ for every $t > t_0$. The following sections lay out the machinery needed to establish these two fundamental results.

When $\kappa = 1$, the rate in Theorem \ref{thm:x2} equals the optimal rate $n^{-\beta / (2\beta + d)}$ up to a factor $\log n$. However, the commonly used inverse Wishart specification of $G$ leads to $\kappa = 2$, and hence Theorem \ref{thm:x2} gives the optimal rate only for $d \ge 2$. We will later see that $\kappa$ has a bigger impact on rates of convergence for anisotropic densities.

Our result also applies for a finite mixture prior specification $\Pi$, where the density function $f$ is
represented by $f(x) = \sum_{h=1}^H \omega_h \phi_{\Sigma} (x-\mu_h)$
and priors are assigned on $H$, $\Sigma$, $\omega = (\omega_1,\ldots,\omega_H)$ and $\mu_1,\ldots,\mu_H$.
We assume $\Sigma \sim G$, which satisfies \eqref{eq:iw1}, \eqref{eq:iw2} and \eqref{eq:iw3}, and that there exist positive constants $a_4$, $b_4$, $b_5$, $b_6$, $b_7$, $C_4$, $C_5$, $C_6$, $C_7$ such that
$b_4 \exp\{ -C_4 x (\log x)^{\tau_1}\} \leq \Pi (H \geq x)   \leq b_5 \exp\{ -C_5 x (\log x)^{\tau_1}\}$ for sufficiently large $x>0$,
while for every fixed $H=h$,
\begin{align}
  \Pi  (\mu_i \notin [-x,x]^d) & \leq b_6\exp(-C_6 x^{a_4}),~~\mbox{for sufficiently large}~x>0, ~~i=1,\ldots,h,  \nn\\
\Pi (\|\omega - \omega_0\| \leq \epsilon) & \geq   b_7 \exp \{-C_7 h \log (1/\epsilon)\}, ~~\mbox{for all}~0< \epsilon < 1/h ~\mbox{and all}~\omega_0 \in \Delta_h.  \nn
\end{align}
Theorem \ref{thm:ratefm} summarizes our findings for a finite mixture prior. Its proof is similar to that of Theorem \ref{thm:x2}
except that in verifying \eqref{eq:e30}, we need $\exp \{- H (\log H)^{\tau_1}\} \lesssim \exp\{-n \tilde{\epsilon}_n^2\}$. Together with $H=\lfloor n\epsilon_n^2/(\log n)\rfloor$, we have $ \epsilon_n^2 (\log n)^{\tau_1 - 1} \geq \tilde{\epsilon}_n^2 $, leading to  $\tilde \epsilon_n = n^{-\beta / (2\beta + \dstar)}(\log n)^{t_0}$ where $t_0 = \{\dstar(1 + 1 / \tau + 1 / \beta) + 1\} / (2 + \dstar / \beta)$ and $\epsilon_n = n^{-\beta / (2\beta + \dstar)}(\log n)^t$ with $t > t_0 + \max\left\{0,(1-\tau_1)/2\right\}$.
\begin{theorem}\label{thm:ratefm}
Suppose that $f_0 \in \mathcal{C}^{\beta, L, \tau_0}(\mathbb{R}^d)$ is a probability density function satisfying \eqref{eq:int} and \eqref{eq:tail} for some positive constants $a, b, c, \tau, \epsilon$. For a finite mixture prior $\Pi$ as in above, \eqref{def rate} holds in the Hellinger or the $L_1$ metric with $\epsilon_n = n^{-\beta / (2\beta + \dstar)} (\log n)^t$ for every $t > \{ \dstar (1 + 1/\tau + 1 / \beta) + 1\} /(2 + \dstar/\beta) + \max\left\{0,(1-\tau_1)/2\right\}$, where $\dstar = \max(d, \kappa)$.
\end{theorem}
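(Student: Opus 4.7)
The plan is to verify the three sufficient conditions \eqref{eq:e28}, \eqref{eq:e30}, and \eqref{eq:e29} of Ghosal and van der Vaart, following the template of Theorem \ref{thm:x2} but replacing the Dirichlet process components with their finite mixture analogues. A key simplification is that the prior on $\Sigma$ is unchanged between the two settings, so all kernel-scale approximation machinery (the sharp normal-mixture approximation of $f_0 \in \mathcal{C}^{\beta, L, \tau_0}(\mathbb{R}^d)$ and the eigenvalue-tail / sieve constructions for $\Sigma$) transfers verbatim; only the way one controls the number of mixture components and the mixing weights needs adjustment.

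For prior thickness \eqref{eq:e29}, I would reuse the approximation argument underlying Theorem \ref{thm:thick}: there is a discrete mixing measure $F^\star$ with $N\asymp \tilde\epsilon_n^{-\dstar/\beta}(\log n)^{c}$ atoms, and $\Sigma^\star$ with eigenvalues of order $\tilde\epsilon_n^{1/\beta}$, such that $p_{F^\star,\Sigma^\star}\in \mathcal{K}(f_0,\tilde\epsilon_n)$. In the finite mixture setting, we lower bound the prior by first conditioning on $H=N$, which contributes $\exp\{-C N(\log N)^{\tau_1}\}\gtrsim \exp\{-C' n\tilde\epsilon_n^2(\log n)^{\tau_1-1}\}$; then by the weight assumption $\Pi(\|\omega-\omega_0\|\le \epsilon)\ge b_7\exp\{-C_7 N\log(1/\epsilon)\}$ applied with $\epsilon\asymp \tilde\epsilon_n$, which contributes $\exp\{-C'' n\tilde\epsilon_n^2\}$; the location-tail and $\Sigma$-concentration pieces are handled exactly as in Theorem \ref{thm:thick}. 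Provided $\tau_1\ge 1$ or the extra $(\log n)^{(1-\tau_1)/2}$ slack in $\tilde\epsilon_n$ is absorbed, all factors collapse into $\exp\{-c_2 n\tilde\epsilon_n^2\}$.

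For the sieve and entropy bound, I would define
\[
\mathcal{F}_n=\Bigl\{\textstyle\sum_{h=1}^H \omega_h \phi_{\Sigma}(\cdot-\mu_h):\; H\le H_n,\; \mu_h\in[-M_n,M_n]^d,\; \underline\sigma_n\le \eig_1(\Sigma)\le \eig_d(\Sigma)\le \bar\sigma_n\Bigr\},
\]
with $H_n=\lfloor n\epsilon_n^2/\log n\rfloor$ and the remaining bounds chosen as in Theorem \ref{thm:sieve}. A standard covering argument on the simplex, the location cube, and the eigenvalue range gives $\log N(\epsilon_n,\mathcal{F}_n,\|\cdot\|_1)\lesssim H_n\log n\lesssim n\epsilon_n^2$, which is \eqref{eq:e28}. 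The complement probability splits as $\Pi(H>H_n)+\Pi(\text{some }\mu_i\notin[-M_n,M_n]^d)+\Pi(\Sigma\text{ has extreme eigenvalues})$; the last two terms are handled as in Theorem \ref{thm:x2}, while the new term is controlled by $\Pi(H>H_n)\le b_5\exp\{-C_5 H_n(\log H_n)^{\tau_1}\}$. Requiring this to be at most $\exp\{-(c_2+4)n\tilde\epsilon_n^2\}$ forces $H_n(\log H_n)^{\tau_1}\gtrsim n\tilde\epsilon_n^2$, equivalently $\epsilon_n^2(\log n)^{\tau_1-1}\gtrsim \tilde\epsilon_n^2$, which precisely produces the additional $\max\{0,(1-\tau_1)/2\}$ term in the exponent $t$ and explains why the rate degrades when $\tau_1<1$.

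The main obstacle I anticipate is the bookkeeping at \eqref{eq:e29} rather than any conceptual difficulty: one must confirm that the number of components $N$ dictated by the approximation lemma can simultaneously satisfy $N(\log N)^{\tau_1}\lesssim n\tilde\epsilon_n^2$ and $N\log(1/\tilde\epsilon_n)\lesssim n\tilde\epsilon_n^2$, so that neither the component-count nor the weight-concentration cost exceeds the allowed budget. This is what pins down the precise exponent $t_0$ and forces $\tilde\epsilon_n\le \epsilon_n$ with the stated gap. Once this is settled, the remaining verifications are routine modifications of the Dirichlet process arguments already established.
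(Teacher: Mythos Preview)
Your proposal is correct and follows the paper's own argument essentially verbatim: the paper states that the proof is identical to that of Theorem~\ref{thm:x2} except that in verifying \eqref{eq:e30} one needs $\exp\{-H(\log H)^{\tau_1}\}\lesssim\exp\{-n\tilde\epsilon_n^2\}$, which with $H=\lfloor n\epsilon_n^2/\log n\rfloor$ yields $\epsilon_n^2(\log n)^{\tau_1-1}\ge\tilde\epsilon_n^2$ and hence the extra $\max\{0,(1-\tau_1)/2\}$ in $t$. You have correctly located the single place where the finite mixture prior forces a change and derived the same constraint; your additional remarks on adapting the thickness argument \eqref{eq:e29} via conditioning on $H=N$ and the weight-concentration assumption are the natural fleshing-out of what the paper leaves implicit.
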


\section{Prior thickness results}
\label{sec3}

Functions in $\mathcal{C}^{\beta, L, \tau_0}$ can be approximated by mixtures of $\phi_{\sigma^2 I}$ with an accuracy that improves with $\beta$.
We establish this through the following constructions and lemma, which are adapted from Lemma 3.4 of \citet{Jonge2010}
and univariate approximation results of \citet{Kruijer2010}. The proofs are given in Appendix A.

For each $k \in \natz^d$, let $m_k$ denote the $k$-th moment $m_k = \int y^k \phi_1(y)dy$ of the standard normal distribution on $\mathbb{R}^d$.
For $n \in \natz^d$, define two sequences of numbers through the following recursion.
If $n_\cdot = 1$, set $c_n = 0$ and $d_n = -m_n / n!$, and for $n_\cdot \ge 2$ define
\begin{equation}
c_n = -\sum_{\substack{n\,=\,l + k\\ l_\cdot \ge 1,~ k_\cdot \ge 1}} \frac{(-1)^{k_{\cdot}}}{k!}m_kd_l,~~~~~~ d_n = \frac{(-1)^{n_\cdot}m_n}{n!} + c_n.
\label{eq:cd}
\end{equation}
 Given $\beta > 0$, $\sigma > 0$, define a transform $T_{\beta, \sigma}$ on
$f:\mathbb{R}^d \to \mathbb{R}$ with derivatives up to order $\lfloor \beta \rfloor$ by
\begin{equation}
T_{\beta,\sigma}f = f - \sum_{\substack{k\,\in\,\natz^d\\ 1 \le k_\cdot \le \lfloor\beta\rfloor}} d_k \sigma^{k_\cdot} D^kf.
\label{ref:T}
\end{equation}

\begin{lemma}
\label{lem:basic}
For any $\beta, \tau_0 > 0$ there is a positive constant $M_\beta$ such that any $f \in \mathcal{C}^{\beta, L, \tau_0}(\mathbb{R}^d)$, it satisfies
$
|\{K_\sigma (T_{\beta,\sigma}f) - f\}(x)| < M_\beta L(x)\sigma^\beta
$
for all $x \in \mathbb{R}^d$ and all $\sigma \in (0, 1/(2\tau_0)^{1/2})$.
\end{lemma}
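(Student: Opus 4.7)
The plan is to reduce the lemma to a Taylor expansion of $f(x-y)$ around $x$, integrated against $\phi_{\sigma^2 I}(y)$, and then verify that the coefficients $d_k$ defined by the recursion \eqref{eq:cd} are precisely what is needed to cancel, order by order in $\sigma$, all spurious polynomial correction terms that $K_\sigma$ introduces. What remains is a Gaussian-weighted integral of the H\"older modulus of the top-order derivatives, which is of order $L(x)\sigma^\beta$ exactly when $\sigma \in (0, 1/\sqrt{2\tau_0})$, explaining the stated range of admissible $\sigma$.

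For the analytic step, using the substitution $y \mapsto \sigma y$ and $\int y^k \phi_1(y)\,dy = m_k$, a Taylor expansion of $f(x-y)$ to order $\lfloor\beta\rfloor$ yields
$$
(K_\sigma f)(x) = \sum_{k_\cdot = 0}^{\lfloor\beta\rfloor} \frac{(-1)^{k_\cdot}}{k!}\sigma^{k_\cdot} m_k\, D^k f(x) + R_\sigma(x),
$$
where Taylor's theorem with integral remainder combined with \eqref{smoothness} gives $|R_\sigma(x)| \lesssim L(x) \int \phi_{\sigma^2 I}(y) e^{\tau_0 \|y\|^2}\|y\|^\beta\,dy$. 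This integral is finite and of order $\sigma^\beta$ precisely when $2\tau_0\sigma^2 < 1$. Since $D^l f \in \mathcal{C}^{\beta - l_\cdot, L, \tau_0}(\mathbb{R}^d)$ whenever $l_\cdot \le \lfloor\beta\rfloor$, applying the same expansion to $K_\sigma(D^l f)$ produces a remainder of order $L(x)\sigma^{\beta - l_\cdot}$, which after multiplication by $\sigma^{l_\cdot}$ is again of order $L(x)\sigma^\beta$.

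Substituting these expansions into the decomposition
$$
K_\sigma(T_{\beta,\sigma}f)(x) - f(x) = \{K_\sigma f(x) - f(x)\} - \sum_{1\le l_\cdot \le \lfloor\beta\rfloor} d_l\,\sigma^{l_\cdot}\, K_\sigma(D^l f)(x)
$$
and collecting the coefficient of $\sigma^{n_\cdot} D^n f(x)$ for each $n \in \natz^d$ with $1\le n_\cdot \le \lfloor\beta\rfloor$ produces
$$
\frac{(-1)^{n_\cdot} m_n}{n!} - d_n - \sum_{\substack{l+k\,=\,n\\ l_\cdot \ge 1,\, k_\cdot \ge 1}} \frac{(-1)^{k_\cdot} m_k}{k!}\, d_l,
$$
where the $-d_n$ term comes from the $k = 0$ contribution in the second sum. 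Forcing this coefficient to vanish reproduces exactly $d_n = (-1)^{n_\cdot} m_n/n! + c_n$ with $c_n$ as defined in \eqref{eq:cd}; the base case $n_\cdot = 1$ gives an empty convolution sum, consistent with $c_n = 0$ and $d_n = -m_n/n!$.

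The main obstacle will be the combinatorial bookkeeping of multi-indices needed to match the double expansion against the recursion for $d_n$; the analytic estimates themselves are routine consequences of the H\"older bound and the Gaussian moment integral. Once the cancellation is verified, the final residual is a finite sum of terms each bounded by a constant multiple of $L(x)\sigma^\beta$, and the resulting constant, which depends only on the moments $m_k$, the coefficients $d_k$, and the Gaussian tail integral for $k_\cdot \le \lfloor \beta \rfloor$, serves as $M_\beta$.
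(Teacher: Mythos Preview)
Your proposal is correct and reaches the same conclusion as the paper, but by a genuinely different route. The paper argues by induction on $\lfloor\beta\rfloor$: it writes
\[
\{K_\sigma(T_{\beta,\sigma}f) - f\}(x) = \int \phi_{\sigma^2 I}(y)R(x,y)\,dy + \sum_{2\le k_\cdot \le \lfloor\beta\rfloor}\sigma^{k_\cdot}\Bigl[\tfrac{(-1)^{k_\cdot}m_k}{k!}(D^kf)(x) - d_k\{K_\sigma(D^kf)\}(x)\Bigr],
\]
then handles the bracketed term by adding and subtracting $K_\sigma T_{\beta-k_\cdot,\sigma}(D^kf)$, invoking the induction hypothesis on $D^kf \in \mathcal{C}^{\beta-k_\cdot,L,\tau_0}$ to bound $D^kf - K_\sigma T_{\beta-k_\cdot,\sigma}(D^kf)$, and finally verifying that the leftover pieces cancel identically thanks to the recursion for $c_n$. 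You instead expand each $K_\sigma(D^lf)$ directly by the same Taylor-plus-H\"older argument used for $K_\sigma f$, collect the coefficient of every $\sigma^{n_\cdot}D^nf(x)$ with $n_\cdot \le \lfloor\beta\rfloor$, and observe that it vanishes precisely because $d_n = (-1)^{n_\cdot}m_n/n! + c_n$. Your approach is more elementary in that it avoids self-referential induction and makes the design of the recursion \eqref{eq:cd} completely transparent; the paper's inductive argument is shorter on the page and packages the combinatorics into a single algebraic identity. Both rely on the same two analytic facts you identify: that $D^lf \in \mathcal{C}^{\beta-l_\cdot,L,\tau_0}$, and that the Gaussian-weighted H\"older remainder is $O(L(x)\sigma^\beta)$ on the stated range of $\sigma$.
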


Lemma \ref{lem:basic} applies to any functions $f \in \mathcal{C}^{\beta, L, \tau_0}$, not necessarily a probability density,
and the mixing function $T_{\beta, \sigma}f$ need not be a density and could be negative.
Fortunately, when $f$ is a probability density, we can derive a density $h_{\sigma}$ from $T_{\beta, \sigma}f$
so that $K_\sigma h_{\sigma}$ provides a $\sigma^{\beta}$ order approximation to $f$. The construction of $h_{\sigma}$ can be viewed as a multivariate extension of results in Section 3 of \citet{Kruijer2010}.
The main difference is that we establish approximation results under the Hellinger distance and apply Taylor expansions on $f_0$ instead of $\log f_0$, which lead to a more elegant proof.

\begin{theorem}
\label{thm:hell approx}
Let $f_0 \in \mathcal{C}^{\beta, L, \tau_0}(\mathbb{R}^d)$ be a probability density function and write $f_{\sigma} = T_{\beta, \sigma}f_0$. Suppose that $f_0$ satisfies \eqref{eq:int} for some $\epsilon > 0$.  Then there exist $s_0 > 0, K > 0$ such that for any $0 < \sigma < s_0$,
$
g_{\sigma} = f_{\sigma} + \frac12f_0 \,\Ind\{f_{\sigma} < (1/2)f_0\}
$
is a non-negative function with $\int g_{\sigma}(x)dx < \infty$ and the density  $h_{\sigma} = g_{\sigma} / \int g_{\sigma}(x)dx$ satisfies $
d_H^2(f_0, K_\sigma h_\sigma) \le K \sigma^{2\beta}.$
\end{theorem}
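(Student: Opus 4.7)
The plan is to leverage Lemma \ref{lem:basic}, which gives the pointwise bound $|K_\sigma f_\sigma(x) - f_0(x)| \le M_\beta L(x) \sigma^\beta$ for $\sigma < 1/(2\tau_0)^{1/2}$. This is the only place where the H\"older hypothesis enters; the remaining work is to replace the possibly signed $f_\sigma = T_{\beta,\sigma} f_0$ by a bona fide probability density without losing the $\sigma^{2\beta}$ order in Hellinger distance.

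First I would quantify the ``bad set'' $B_\sigma = \{x : f_\sigma(x) < f_0(x)/2\}$ on which the correction in $g_\sigma$ is active. Writing $f_0 - f_\sigma = \sum_{1 \le k_\cdot \le \lfloor\beta\rfloor} d_k \sigma^{k_\cdot} D^k f_0$, on $B_\sigma$ we have $(f_0 - f_\sigma)/f_0 > 1/2$, so since the sum has a fixed finite number of terms, at least one must dominate: there exists a multi-index $k$ with $|D^k f_0(x)|/f_0(x) \ge c\sigma^{-k_\cdot}$ for a constant $c = c(\beta,d) > 0$. Markov's inequality applied to the moment condition \eqref{eq:int} then yields
\begin{equation*}
f_0(B_\sigma) \le \sum_{1 \le k_\cdot \le \lfloor\beta\rfloor} P_0\!\left\{\frac{|D^k f_0|}{f_0} \ge c\sigma^{-k_\cdot}\right\} \lesssim \sigma^{2\beta + \epsilon}.
\end{equation*}
Integration by parts shows $\int D^k f_0\,dx = 0$ for $k_\cdot \ge 1$, so $\int f_\sigma = 1$ and hence $\int g_\sigma = 1 + (1/2)f_0(B_\sigma) = 1 + O(\sigma^{2\beta+\epsilon})$; choosing $s_0$ small enough, $h_\sigma$ is a well-defined probability density (the non-negativity of $g_\sigma$ reduces to verifying $f_\sigma \ge -f_0/2$, which follows from the same moment chase once $\sigma$ is sufficiently small).

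To bound the Hellinger distance I would use $(\sqrt{a}-\sqrt{b})^2 \le (a-b)^2/a$ to obtain $d_H^2(f_0, K_\sigma h_\sigma) \le \int (f_0 - K_\sigma h_\sigma)^2/f_0\,dx$, then split $K_\sigma h_\sigma - f_0$ via the normalizing constant $M_\sigma = \int g_\sigma$ into three pieces,
\begin{equation*}
K_\sigma h_\sigma - f_0 = M_\sigma^{-1}(K_\sigma f_\sigma - f_0) + \tfrac{1}{2} M_\sigma^{-1} K_\sigma(f_0 \Ind_{B_\sigma}) + (M_\sigma^{-1} - 1) f_0.
\end{equation*}
The first piece contributes $O(\sigma^{2\beta})$ after squaring and integrating, because $\int (L/f_0)^2 f_0\,dx < \infty$ follows from \eqref{eq:int} with exponent $(2\beta+\epsilon)/\beta > 2$ via Jensen's inequality. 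The third piece gives $O(\sigma^{4\beta+2\epsilon})$ by the normalization estimate above. The hard part is the middle piece: even though $f_0(B_\sigma)$ is small, the Gaussian convolution may spread its contribution into regions where $f_0$ is much smaller, so pointwise $K_\sigma(f_0\Ind_{B_\sigma})/f_0$ need not be small. I would dispatch it by Cauchy--Schwarz, $[K_\sigma(f_0 \Ind_{B_\sigma})(x)]^2 \le K_\sigma(f_0^2 \Ind_{B_\sigma})(x)$, exchange the order of integration to reduce the bound to $\int_{B_\sigma} f_0(z)^2 K_\sigma(1/f_0)(z)\,dz$, and control the interior convolution using the Gaussian tail of $\phi_{\sigma^2 I}$ against the envelope bound \eqref{eq:tail}. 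This tail-chase between $B_\sigma$ and the light tails of $f_0$ is the technical heart of the argument, and is where the various $L^p$ moment assumptions in \eqref{eq:int} must be reconciled to deliver the final $O(\sigma^{2\beta})$ bound.
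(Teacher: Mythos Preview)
Your handling of the correction term is where the argument breaks. You bound $d_H^2(f_0, K_\sigma h_\sigma)$ by $\int (f_0 - K_\sigma h_\sigma)^2/f_0\,dx$ and are then forced to control $\int [K_\sigma(f_0\Ind_{B_\sigma})]^2/f_0\,dx$, which after Cauchy--Schwarz and Fubini becomes $\int_{B_\sigma} f_0(z)^2\, K_\sigma(1/f_0)(z)\,dz$. To bound $K_\sigma(1/f_0)$ you invoke the tail condition \eqref{eq:tail}, but that is not among the hypotheses of this theorem; it enters only later, in Proposition~\ref{prop:compact}. Worse, \eqref{eq:tail} is an \emph{upper} bound on $f_0$ and says nothing about $1/f_0$. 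Without a lower tail bound, which is nowhere assumed, $K_\sigma(1/f_0)$ can be infinite --- for instance whenever $f_0$ vanishes on a set of positive measure, since $\phi_{\sigma^2 I}$ has full support. The tail-chase you describe therefore cannot be completed from the stated assumptions.

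The paper sidesteps this by using the sharper inequality $(\sqrt a-\sqrt b)^2 \le (a-b)^2/(a+b)$ in place of your $(a-b)^2/a$. The denominator $c_\sigma f_0 + K_\sigma g_\sigma$ then allows one to put $K_\sigma g_\sigma$, rather than $f_0$, under the correction piece. Writing $r_\sigma = \tfrac12 f_0\,\Ind_{B_\sigma}$, the pointwise relation $r_\sigma \le g_\sigma$ gives $K_\sigma r_\sigma \le K_\sigma g_\sigma$ and hence
\[
\int \frac{(K_\sigma r_\sigma)^2}{K_\sigma g_\sigma}\,dx \;\le\; \int K_\sigma r_\sigma\,dx \;=\; \int r_\sigma\,dx \;\le\; \tfrac12\,P_0(B_\sigma) \;=\; O(\sigma^{2\beta}),
\]
with no reference to $1/f_0$ at all. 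This one-line device is the missing idea; the rest of your outline (the Markov bound on $P_0(B_\sigma)$, the normalization estimate, and the use of Lemma~\ref{lem:basic} for the $K_\sigma f_\sigma - f_0$ piece) matches the paper.

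A smaller point: your claim that $f_\sigma \ge -f_0/2$ ``follows from the same moment chase once $\sigma$ is sufficiently small'' is not right as stated. The conditions \eqref{eq:int} are integral, so Markov's inequality only shows that the set $\{f_\sigma < -f_0/2\}$ has $P_0$-mass of order $\sigma^{2\beta+\epsilon}$, not that it is empty for small $\sigma$; pointwise non-negativity of $g_\sigma$ does not follow from this line of reasoning.
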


The next result trades $g_\sigma$ for a compactly supported density $h_\sigma$ whose convolution with $\phi_{\sigma^2 I}$ inherits the same order $\sigma^\beta$ approximation to $f_0$. We need the tail condition \eqref{eq:tail} on $f_0$ to obtain a suitable compact support.

\begin{proposition}
\label{prop:compact}
Let $f_0 \in \mathcal{C}^{\beta,L,\tau_0}(\mathbb{R}^d)$ be a probability density function satisfying \eqref{eq:int}
and \eqref{eq:tail} for some positive constants  $\epsilon, a, b, c, \tau$. For any $\sigma > 0$, define $E_\sigma = \{x \in \mathbb{R}^d: f_0(x) \ge \sigma^{(4\beta + 2\epsilon + 8)/\delta}\}$. Then there exist $s_0, a_0, B_0, K_0 > 0$ such that for every $0 < \sigma < s_0$, $P_0(E_\sigma^c) \le B_0\sigma^{4\beta + 2\epsilon + 8}$, $E_\sigma \subset \{x\in \mathbb{R}^d:\|x\|\le a_\sigma\}$ where $a_\sigma = a_0 \{\log (1/\sigma)\}^\tau$ and there is a probability density $\tilde h_\sigma$ with support inside $\{x\in \mathbb{R}^d: \|x\| \le  a_\sigma\}$ satisfying $d_H(f_0, K_\sigma \tilde h_\sigma) \le K_0 \sigma^\beta$.
\end{proposition}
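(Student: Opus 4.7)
The plan is to construct $\tilde h_\sigma$ by truncating the unnormalized approximant $g_\sigma$ from Theorem \ref{thm:hell approx} to the ball $B_\sigma=\{x:\|x\|\le a_\sigma\}$ with $a_\sigma=a_0\{\log(1/\sigma)\}^{1/\tau}$, renormalizing, and then showing that this localization costs only $O(\sigma^{2\beta})$ in $L^1$, which upgrades to an $O(\sigma^\beta)$ Hellinger perturbation after convolving with $\phi_{\sigma^2 I}$.

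The first two conclusions are essentially bookkeeping. Choosing $a_0$ large enough that $ba_0^\tau$ dominates $(4\beta+2\epsilon+8)/\delta$, the tail bound \eqref{eq:tail} forces $f_0(x)<\sigma^{(4\beta+2\epsilon+8)/\delta}$ whenever $\|x\|\ge a_\sigma$ and $\sigma$ is small, so $E_\sigma\subset B_\sigma$. The decomposition
\[
P_0(E_\sigma^c)\;\le\;\int_{B_\sigma^c} f_0\,dx\;+\;\sigma^{(4\beta+2\epsilon+8)/\delta}\,\mathrm{vol}(B_\sigma)
\]
then reduces the probability bound to a standard tail estimate of $e^{-b\|x\|^\tau}$ for the first summand and to the polynomial volume $\{\log(1/\sigma)\}^{d/\tau}$ for the second; enlarging $a_0$ and picking $\delta$ appropriately makes both pieces $O(\sigma^{4\beta+2\epsilon+8})$.

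The substantive step is the tail mass bound for $g_\sigma$ itself. Set $\tilde g_\sigma=g_\sigma\,\Ind_{B_\sigma}$ and $\tilde h_\sigma=\tilde g_\sigma/\int \tilde g_\sigma$, so $\mathrm{supp}(\tilde h_\sigma)\subset B_\sigma$ by construction. The defining formulas $g_\sigma=f_\sigma+\tfrac12 f_0\,\Ind\{f_\sigma<\tfrac12 f_0\}$ and \eqref{ref:T} yield the pointwise bound $g_\sigma\le \tfrac{3}{2}f_0+\sum_{1\le k_\cdot\le\lfloor\beta\rfloor}|d_k|\sigma^{k_\cdot}|D^kf_0|$. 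Integrating and applying H\"older with exponent $(2\beta+\epsilon)/k_\cdot$ to each derivative piece gives
\[
\int_{B_\sigma^c}|D^kf_0|\,dx\;\le\;\bigl\{P_0(|D^kf_0|/f_0)^{(2\beta+\epsilon)/k_\cdot}\bigr\}^{k_\cdot/(2\beta+\epsilon)} P_0(B_\sigma^c)^{1-k_\cdot/(2\beta+\epsilon)},
\]
whose leading factor is finite by \eqref{eq:int}. Because $a_0$ can be taken so large that $P_0(B_\sigma^c)\le\sigma^q$ for any prescribed $q$, the product $\sigma^{k_\cdot}P_0(B_\sigma^c)^{1-k_\cdot/(2\beta+\epsilon)}$ is $O(\sigma^{2\beta})$ uniformly over the finitely many indices $1\le k_\cdot\le\lfloor\beta\rfloor$, whence $\int_{B_\sigma^c} g_\sigma\le C\sigma^{2\beta}$.

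It remains to convert this $L^1$ loss into a Hellinger bound. Since $g_\sigma\ge f_\sigma$ and $\int D^kf_0=0$ for $k_\cdot\ge1$, the normalizer $\int g_\sigma$ is at least $1$, and $\int\tilde g_\sigma\ge 1-C\sigma^{2\beta}\ge 1/2$ for small $\sigma$. A routine estimate then gives $\|h_\sigma-\tilde h_\sigma\|_1\le 2\|g_\sigma-\tilde g_\sigma\|_1/\int g_\sigma\le 4C\sigma^{2\beta}$; by $L^1$-contractivity of the Markov kernel $K_\sigma$ the same bound holds for $\|K_\sigma h_\sigma-K_\sigma\tilde h_\sigma\|_1$, and the elementary inequality $d_H^2\le\|\cdot\|_1$ together with the triangle inequality against Theorem \ref{thm:hell approx} finishes the proof. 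The main technical obstacle lies in the H\"older step above: the exponent $(2\beta+\epsilon)/k_\cdot$ supplied by \eqref{eq:int} is only just large enough to reach the target $\sigma^{2\beta}$, and the calibration of $a_0$ that achieves this is precisely what dictates the seemingly arbitrary exponent $4\beta+2\epsilon+8$ in the definition of $E_\sigma$.
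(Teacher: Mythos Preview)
Your proof is correct and follows the same overall route as the paper: truncate the approximant from Theorem~\ref{thm:hell approx} to a compact set, bound the discarded mass via H\"older against the moment conditions~\eqref{eq:int}, and convert the loss into a Hellinger perturbation of $K_\sigma h_\sigma$. Two minor technical choices differ. First, for $P_0(E_\sigma^c)$ the paper uses a one-line Markov argument on $f_0^{-\delta}$: the tail condition~\eqref{eq:tail} gives $B_0=P_0(f_0^{-\delta})<\infty$ for some small $\delta>0$, and then $P_0(E_\sigma^c)=P_0\{f_0^{-\delta}>\sigma^{-(4\beta+2\epsilon+8)}\}\le B_0\sigma^{4\beta+2\epsilon+8}$ exactly, which is cleaner than your tail-plus-volume decomposition and makes the role of $\delta$ (and the exponent $4\beta+2\epsilon+8$) transparent. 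Second, the paper truncates $h_\sigma$ to the level set $E_\sigma$ rather than the ball $B_\sigma$, and passes to Hellinger directly via $d_H(h_\sigma,\tilde h_\sigma)=[2-2\{\int_{E_\sigma}h_\sigma\}^{1/2}]^{1/2}$ combined with the convolution contraction $d_H(K_\sigma h_\sigma,K_\sigma\tilde h_\sigma)\le d_H(h_\sigma,\tilde h_\sigma)$, bypassing your $L^1$ detour; the bounds obtained are equivalent.
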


Proposition \ref{prop:compact} paves the way to calculating prior thickness around $f_0$ because the probability density $K_\sigma \tilde h_\sigma$
can be well approximated by densities $p_{F,\Sigma}$ with $(F, \Sigma)$ chosen from a suitable set.
Toward this we present the final theorem of this section and a proof of it that overlaps with Section 9 of \citet{Ghosal2007}.
However, our proof requires new calculations to handle a non-compactly supported $f_0$ and a matrix valued $\Sigma$.

\begin{theorem}
\label{thm:thick}
Let $f_0 \in \mathcal{C}^{\beta,L, \tau_0}(\mathbb{R}^d)$ be a bounded probability density function satisfying \eqref{eq:int} and \eqref{eq:tail} for some positive constants $\epsilon, a, b, c, \tau$. Then for some $A, C > 0$ and all sufficiently large $n$,
\begin{equation}\label{eq:inthick}
(\mathcal{D}_\alpha \times G)\left\{(F, \Sigma) : P_0\log\frac{f_0 }{p_{F,\Sigma}} \le A\tilde \epsilon_n^2,~~P_0\left(\log\frac{f_0 }{ p_{F,\Sigma}}\right)^2 \le A\tilde \epsilon_n^2\right\} \ge e^{-Cn\tilde\epsilon_n^2}
\end{equation}
 where $\tilde \epsilon_n = n^{-\beta / (2\beta + \dstar)}(\log n)^t$ with any $t \ge \{\dstar(1 + 1 / \tau + 1 / \beta) + 1\} / (2 + \dstar/\beta)$.
\end{theorem}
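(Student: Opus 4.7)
The plan is to adapt the strategy of Section 9 of \citet{Ghosal2007} from the univariate, scalar-bandwidth setting to the multivariate, matrix-scale one, using the sharp Hellinger-approximation of Proposition \ref{prop:compact} as the starting point in place of the old Kullback--Leibler approximation. I proceed in three layers: first replace $f_0$ by a Gaussian mixture at bandwidth $\sigma_n$, then atomize the mixing density on a grid, and finally enlarge the resulting $(F^*,\sigma_n^2 I)$ to a neighbourhood in $\mathcal{D}_\alpha\times G$ of large prior probability whose members all lie in the required Kullback--Leibler ball around $f_0$. The bandwidth is chosen as $\sigma_n\asymp n^{-1/(2\beta+\dstar)}(\log n)^{t/\beta}$, so that $\sigma_n^\beta$ matches $\tilde\epsilon_n$ up to a logarithmic factor.

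At this bandwidth Proposition \ref{prop:compact} produces a probability density $\tilde h_{\sigma_n}$ supported in a ball of radius $a_{\sigma_n}\asymp(\log n)^\tau$ with $d_H(f_0,K_{\sigma_n}\tilde h_{\sigma_n})\lesssim\sigma_n^\beta$. I partition this ball into $N\asymp(a_{\sigma_n}/\sigma_n)^d$ cubes $U_1,\ldots,U_N$ of side $\sigma_n$, pick a representative $z_j\in U_j$, and set $F^*=\sum_{j=1}^N p_j\delta_{z_j}$ with $p_j=\int_{U_j}\tilde h_{\sigma_n}$. Local Lipschitzness of $\phi_{\sigma_n^2 I}$ on scale $\sigma_n$ gives $\|K_{\sigma_n}\tilde h_{\sigma_n}-p_{F^*,\sigma_n^2 I}\|_1\lesssim\sigma_n$. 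I then consider the event
\[
A_n=\Bigl\{(F,\Sigma):\sum_{j=1}^N|F(U_j)-p_j|<\tilde\epsilon_n,\ F\bigl((\cup_j U_j)^c\bigr)<\tilde\epsilon_n,\ (1-\delta_n)\sigma_n^{-2}<\eig_i(\Sigma^{-1})<(1+\delta_n)\sigma_n^{-2}\ \forall i\Bigr\}
\]
with $\delta_n\asymp\tilde\epsilon_n$. A perturbation estimate for $\phi_\Sigma(\cdot-z)$ jointly in $z$ and in the eigenvalues of $\Sigma$, parallel to but richer than the scalar-bandwidth argument in \citet{Ghosal2007}, shows that on $A_n$ the mixture $p_{F,\Sigma}$ is within $\tilde\epsilon_n$ of $p_{F^*,\sigma_n^2 I}$ in $L_1$, whence $d_H(f_0,p_{F,\Sigma})\lesssim\tilde\epsilon_n$.

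To upgrade this Hellinger bound to the two-sided Kullback--Leibler bound in \eqref{eq:inthick} I invoke Lemma 7 of \citet{Ghosal2007}, which requires $P_0(f_0/p_{F,\Sigma})^\eta$ to be bounded by a polynomial in $\log n$ on $A_n$ for some $\eta>0$; this follows from the pointwise lower bound $p_{F,\Sigma}(x)\gtrsim\sigma_n^d\exp(-C\|x\|^2/\sigma_n^2)$ valid on $A_n$, the tail condition \eqref{eq:tail} on $f_0$, and the moment hypothesis \eqref{eq:int}. For the prior mass, the finite-dimensional Dirichlet small-ball inequality of Lemma A.1 in \citet{Ghosal2007}, combined with positivity of the density of $\bar\alpha$ on each $U_j$ and \eqref{eq:dp1} on the exceptional cell, contributes $\gtrsim\exp\{-c_1 N\log(1/\tilde\epsilon_n)\}$, while \eqref{eq:iw3} with $s_i\equiv(1-\delta_n)\sigma_n^{-2}$ contributes $\gtrsim\sigma_n^{2a_4}\delta_n^{a_5}\exp(-C_3\sigma_n^{-\kappa})$. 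Substituting $N\asymp\sigma_n^{-d}(\log n)^{d\tau}$ and $\sigma_n^{-\dstar}\asymp n^{\dstar/(2\beta+\dstar)}(\log n)^{-\dstar t/\beta}$ and tracking log factors carefully, both dominant terms $N\log(1/\tilde\epsilon_n)$ and $\sigma_n^{-\kappa}$ are bounded by $n\tilde\epsilon_n^2$ exactly when $t\ge t_0$.

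The main obstacle is the compatibility between the Hellinger-to-Kullback--Leibler upgrade and the prior small-ball estimate: the pointwise lower envelope $\sigma_n^d\exp(-C\|x\|^2/\sigma_n^2)$ for $p_{F,\Sigma}$ is weakest on the very tails of $f_0$, so the moment exponent $(2\beta+\epsilon)/\beta$ in \eqref{eq:int} must be strong enough to dominate this Gaussian envelope after averaging against $f_0$. Handling the matrix-valued $\Sigma$ is a secondary but genuine difficulty, because the scalar eigenvalue concentration \eqref{eq:iw3} has to be translated into uniform $L_1$ closeness of $p_{F,\Sigma}$ to $p_{F^*,\sigma_n^2 I}$ on $A_n$; it is precisely this step that forces $\dstar=\max(d,\kappa)$, and not $d$ alone, to appear in the final exponent.
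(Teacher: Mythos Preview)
Your overall architecture matches the paper's, but two steps contain genuine gaps.

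First, the discretization. Moving the mass of $\tilde h_{\sigma_n}$ within a cube of side $\sigma_n$ to its representative $z_j$ does not give $\|K_{\sigma_n}\tilde h_{\sigma_n}-p_{F^*,\sigma_n^2I}\|_1\lesssim\sigma_n$: the Lipschitz bound $\|\phi_{\sigma^2I}(\cdot-y)-\phi_{\sigma^2I}(\cdot-z)\|_1\lesssim\|y-z\|/\sigma$ yields only an $O(1)$ error when $\|y-z\|\le\sigma_n$, and even an $O(\sigma_n)$ error would be too crude for $\beta>1$ since you need accuracy $\sigma_n^\beta$. Shrinking the cubes to side $\sigma_n\tilde\epsilon_n$ to recover the required accuracy inflates $N$ to order $(a_{\sigma_n}/(\sigma_n\tilde\epsilon_n))^d\asymp\tilde\epsilon_n^{-d(1+1/\beta)}$ up to logs, and then the Dirichlet small-ball bound $\exp\{-cN\log(1/\tilde\epsilon_n)\}$ is no longer at least $\exp(-Cn\tilde\epsilon_n^2)$ for the target $\tilde\epsilon_n$. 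The paper circumvents this with a moment-matching discretization (Corollary~\ref{cor:disc approx}, a $d$-dimensional extension of Lemma~2 of \citet{Ghosal2007}) that produces a discrete $F_\sigma$ with only $N\lesssim\sigma^{-d}\{\log(1/\tilde\epsilon_n)\}^{d+d/\tau}$ atoms while achieving Hellinger error $\lesssim\tilde\epsilon_n^{b_1}$; this is the non-trivial ingredient your proposal is missing.

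Second, the Hellinger-to-KL upgrade. The global envelope $p_{F,\Sigma}(x)\gtrsim\sigma_n^d\exp(-C\|x\|^2/\sigma_n^2)$ is far too weak to make $P_0(f_0/p_{F,\Sigma})^\eta$ polylogarithmic: with the tail condition \eqref{eq:tail} the resulting integral either diverges (for $\tau\le 2$) or grows like $\exp(\sigma_n^{-c})$. The paper instead adds to the event the requirement $\min_{j}F(U_j)\ge\tilde\epsilon_n^{4db_1}/2$, which is absent from your $A_n$; this guarantees $p_{F,\Sigma}(x)/f_0(x)\ge\lambda$ with $\log(1/\lambda)\lesssim\log(1/\tilde\epsilon_n)$ throughout $\{\|x\|\le a_\sigma\}$, and then handles $\{\|x\|>a_\sigma\}$ separately by showing $P_0\{(\log f_0/p_{F,\Sigma})^2\Ind(\|X\|>a_\sigma)\}\lesssim\sigma^{2\beta+\epsilon}$ via Cauchy--Schwarz and the bound $P_0(E_\sigma^c)\lesssim\sigma^{4\beta+2\epsilon+8}$ from Proposition~\ref{prop:compact}. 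The appropriate thresholded inequality is the paper's Lemma~\ref{lem:kl by hell}, not the moment form of Lemma~7 you invoke.
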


\begin{proof}
Let $\delta, s_0, a_0, K_0$ be as in Proposition \ref{prop:compact}. Consider $n$ large enough so that $\tilde \epsilon_n  < s_0^{\beta}$. Fix $\sigma^{\beta} = \tilde \epsilon_n \{\log (1/\tilde\epsilon_n)\}^{-1}$ and as in Proposition \ref{prop:compact} define $E_\sigma = \{x \in \mathbb{R}^d: f_0(x) \ge \sigma^{(4\beta + 2\epsilon + 8)/\delta}\}$ and $a_\sigma = a_0 \{\log(1/\sigma)\}^{1/\tau}$. Recall that $P_0(E_\sigma^c) \le B_0\sigma^{4\beta + 2\epsilon + 8}$ for some constant $B_0$ and $E_\sigma \subset \{x\in \mathbb{R}^d:\|x\|\le a_\sigma\}$.  Apply Proposition \ref{prop:compact} to find $\tilde h_\sigma$ with support $E_\sigma$ such that $d_H(f_0, K_\sigma \tilde h_\sigma) \le K_0\sigma^\beta$. Find $b_1 > \max(1, 1 / 2\beta)$ such that $\tilde\epsilon_n^{b_1}\{\log(1/\tilde\epsilon_n)\}^{5/4} \le \tilde \epsilon_n$.

By Corollary \ref{cor:disc approx} there is a discrete probability measure $F_\sigma = \sum_{j = 1}^N p_j\delta_{z_j}$, with at most
$N \le D_0 \sigma^{-d}\{\log(1/\sigma)\}^{d/\tau}\{\log (1/\tilde \epsilon_n)\}^{d} \le D_1 \sigma^{-d}\{\log (1/\tilde\epsilon_n)\}^{d + d / \tau}
$ many support points inside $\{x\in \mathbb{R}^d:\|x\|\le a_\sigma\}$, with at least $\sigma\tilde \epsilon_n^{2b_1}$ separation between any $z_i \ne z_j$ such that $d_H(K_\sigma \tilde h_\sigma, K_\sigma F_\sigma) \le A_1 \tilde\epsilon_n^{b_1}\{\log(1/\tilde\epsilon_n)\}^{1/4}$ for some constants $A_1, D_1$.

Place disjoint balls $U_j$ with centers at $z_1,\ldots,z_N$ with diameter $\sigma \tilde\epsilon_n^{2b_1}$ each.
Extend $\{U_1, \ldots, U_N\}$ to a partition $\{U_1, \ldots, U_K\}$ of $\{x \in \mathbb{R}^d: \|x\| \le a_\sigma\}$
such that each $U_j$ $(j = N + 1, \ldots, K)$ has a diameter at most $\sigma$. This can be done with $K \le D_2 \sigma^{-d}\{\log(1/\tilde\epsilon_n)\}^{d + d / \tau}$ for some constant $D_2$. Further extend this to a partition $U_1, \ldots, U_M$ of $\mathbb{R}^d$ such that $a_1(\sigma\tilde\epsilon_n^{2b_1})^d \le \alpha(U_j) \le 1$ for all $j = 1, \ldots, M$ for some constant $a_1$. We can still have $M \le D_3 \sigma^{-d}\{\log(1/\tilde\epsilon_n)\}^{d + d/\tau} \le D_4 \tilde\epsilon_n^{-d/\beta}\{\log(1/\tilde\epsilon_n)\}^{sd}$ with $s = 1 + 1/\beta + 1 / \tau$, for some constants $D_3, D_4$. None of these constants depends on $n$ or $\sigma$.

Define $p_j = 0$  $(j = N + 1, \ldots, M)$. Let $\mathcal{P}_\sigma$ denote the set of probability measures $F$ on $\mathbb{R}^d$ with $\sum_{j = 1}^M |F(U_j) - p_j| \le 2\tilde\epsilon_n^{2db_1}$ and $\min_{1 \le j \le M} F(U_j) \ge \tilde\epsilon_n^{4db_1}/2$. Note that
\begin{align*}
M\tilde\epsilon_n^{2db_1} & \le D_4 [\tilde\epsilon_n^{b_1 - 1/(2\beta)}\{\log(1/\tilde\epsilon_n)\}^{s/2}]^{2d} \le 1, \\
\min_{1\le j\le M}\alpha(U_j)^{1/2}& \ge a_1^{1/2} \tilde\epsilon_n^{2db_1} \{\tilde\epsilon_n^{b_1 - 1/(2\beta)}\log(1/\tilde\epsilon_n)\}^{-d} \ge (a_1 / D_4)^{1/2} \tilde\epsilon_n^{2db_1},
\end{align*}
provided $n$ has been chosen large enough. By Lemma 10 of \cite{Ghosal2007}, $\mathcal{D}_\alpha(\mathcal{P}_\sigma) \ge C_1 \exp\{-c_1M\log(1/\tilde\epsilon_n)\} \ge C_1\exp[-c_2 \tilde\epsilon_n^{-d/\beta}\{\log(1/\tilde\epsilon_n)\}^{sd + 1}]$ for some constants $C_1, c_2$ that depend on $\alpha(\mathbb{R}^d)$, $a_1$, $D_4$, $d$ and $b_1$. Also, let $\mathcal{S}_\sigma$ denote the set of all $d\times d$ non-singular matrices $\Sigma$ such that all eigenvalues of $\Sigma^{-1}$ lie between $\sigma^{-2}$ and $\sigma^{-2}(1 + \sigma^{2\beta})$. By \eqref{eq:iw3}, $G(\mathcal{S}_\sigma) \ge \sigma^{D_5}\exp(-D_6/\sigma^{\kappa}) \ge C_3\exp[-c_3 \tilde\epsilon_n^{-\kappa/\beta}\{\log(1/\tilde\epsilon_n)\}^{s\kappa + 1}]$ for some constants $C_3, c_3$. Any $\Sigma \in \mathcal{S}_\sigma$ satisfies $\det(\Sigma^{-1}) \ge \sigma^{-2d}$, $y^T\Sigma^{-1}y \le 2\|y\|^2 / \sigma^2$ for any $y \in \mathbb{R}^d$ and $|\mathrm{tr}(\sigma^2\Sigma^{-1}) - d - \log\det(\sigma^2\Sigma^{-1})| < d\sigma^{2\beta}$.

Apply Lemma \ref{lem:part approx} with $V_i = U_i$  $(i = 1, \ldots, N)$ and $V_0 = \cup_{j > N} U_j$ to conclude that for any $F \in \mathcal{P}_\sigma$, $d_H(K_\sigma F_\sigma, K_\sigma F) \le A_2 \tilde\epsilon_n^{b_1}$ for some universal constant $A_2$ and hence
\begin{align*}
d_H(f_0, K_\sigma F) & \le d_H(f_0, K_\sigma \tilde h_\sigma) + d_H(K_\sigma \tilde h_\sigma, K_\sigma F_\sigma) + d_H(K_\sigma F_\sigma, \phi_{\sigma^2 I}  * F)\\
& \le K_0\sigma^{\beta} + A_1 \tilde\epsilon_n^{b_1}\{\log(1/\tilde\epsilon_n)\}^{1/4} + A_2\tilde\epsilon_n^{b_1}  \le A_3 \sigma^\beta
\end{align*}
for some constant $A_3$. Therefore, for any $F \in \mathcal{P}_\sigma$, $\Sigma \in \mathcal{S}_\sigma$,
$
d_H(f_0, p_{F,\Sigma}) \le d_H(f_0, K_\sigma F) + d_H(p_{F, \sigma^2I}, p_{F, \Sigma}) \le A_4\sigma^\beta
$
for some constant $A_4$ because $d_H(p_{F, \sigma^2I}, p_{F, \Sigma}) \le |\mathrm{tr}(\sigma^2\Sigma^{-1}) - d - \log\det(\sigma^2\Sigma^{-1})|^{1/2}$ for any $F$. Moreover, for every $x \in \mathbb{R}^d$ with $\|x\| < a_\sigma$,
\[
\frac{p_{F,\Sigma}(x)}{f_0(x)} \ge \frac{K_1}{\sigma^d} \int_{\|x - z\|\le \sigma} \exp \left(-\frac{\|x - z\|^2}{\sigma^2}\right)F(dz) \ge \frac{K_2}{\sigma^d}F(U_{J(x)}) \ge K_3\frac{\tilde\epsilon_n^{4db_1}}{\sigma^d}
\]
for some constants $K_1, K_2, K_3$, where $J(x)$ denotes the index $j\in\{1, \ldots, K\}$ for which $x \in U_j$. The penultimate inequality follows because $U_{J(x)}$ with diameter no larger than $\sigma$ must be a subset of a ball of radius $\sigma$ around $x$. Also, for any $x \in\mathbb{R}^d$ with $\|x\| > a_\sigma$,
\[
\frac{p_{F,\Sigma}(x)}{f_0(x)} \ge \frac{K_1}{\sigma^d} \int_{\|z\| \le a_\sigma} \exp \left(-\frac{\|x - z\|^2}{\sigma^2}\right)F(dz) \ge \frac{K_4}{\sigma^d} \exp(-4\|x\|^2 / \sigma^2)
\]
for some constant $K_4$ because $\|x - z\|^2 \le 2 \|x\|^2 + 2\|z\|^2 \le 4 \|x\|^2$ and $F(\{x\in\mathbb{R}^d : \|x\| \le a_\sigma\}) \ge 1 - 2\tilde\epsilon_n^{2db_1}$. Set $\lambda = K_3\tilde\epsilon_n^{4db_1} / \sigma^d$ and notice $\log(1/\lambda) \le K_5 \log(1/\tilde\epsilon_n)$ for some constant $K_5$. For any $F \in \mathcal{P}_\sigma$, $\Sigma \in \mathcal{S}_\sigma$,
\begin{align*}
P_0\left\{\left(\log\frac{f_0}{p_{F,\Sigma}}\right)^2\Ind\left(\frac{p_{F,\Sigma}}{f_0} < \lambda\right)\right\} & \le \frac{K_6}{\sigma^4}\int_{\|x\| > a_\sigma} \|x\|^4f_0(x)dx\\
& \le \frac{K_6}{\sigma^4} (P_0\|X\|^8)^{1/2}P_0(E_\sigma^c)^{1/2} \le K_7\sigma^{2\beta + \epsilon}
\end{align*}
for some constant $K_7$ since $P_0\|X\|^m < \infty$ for all $m > 0$ because of the tail condition \eqref{eq:tail}.
Given $n$ sufficiently large, we have $\lambda < e^{-1}$ and hence $\log({f_0}/{p_{F,\Sigma}})\Ind({p_{F,\Sigma}}/{f_0}< \lambda) \leq \left\{\log({f_0}/{p_{F,\Sigma}})\right\}^2\Ind({p_{F,\Sigma}}/{f_0}< \lambda)$.
Therefore $P_0\{\log({f_0}/{p_{F,\Sigma}})\Ind({p_{F,\Sigma}}/{f_0}< \lambda)\} \le K_7\sigma^{2\beta + \epsilon}$. Now apply Lemma \ref{lem:kl by hell} to conclude that both $P_0\{\log (f_0 / p_{F,\Sigma})\}$ and $P_0\{\log (f_0 / p_{F,\Sigma})\}^2$ are bounded by $K_8 \log(1/\lambda)^2 \sigma^{2\beta} \le K_9\sigma^{2\beta}\{\log(1/\tilde\epsilon_n)\}^2 \le A\tilde\epsilon_n^2$ for some positive constant $A$. Therefore
\begin{align*}
(\mathcal{D}_\alpha \times G)& \left[P_0\log\frac{f_0 }{p_{F,\Sigma}} \le A\tilde \epsilon_n^2, P_0\left(\log\frac{f_0 }{ p_{F,\Sigma}}\right)^2 \le A\tilde \epsilon_n^2\right] \\
& \ge \mathcal{D}_\alpha (\mathcal{P}_\sigma)G(\mathcal{S}_\sigma) \nn \\
& \ge C_4\exp\left[-c_4\tilde\epsilon_n^{-\dstar /\beta}\{\log(1/\tilde\epsilon_n)\}^{s\dstar + 1}\right].
\end{align*}
This gives \eqref{eq:inthick} provided $\tilde\epsilon_n^{-\dstar/\beta}\{\log(1/\tilde\epsilon_n)\}^{s\dstar + 1} \le n\tilde\epsilon_n^2.$
With $\tilde \epsilon_n = n^{-\beta / (2\beta + \dstar)}(\log n)^t$, the condition is satisfied if $t \ge (s\dstar + 1) / (2 + \dstar / \beta)$.
\end{proof}

\section{Sieve construction}
In the following proposition, based on the stick-breaking representation of a Dirichlet process, we give an explicit definition of the sieve and derive upper bounds for its entropy and the prior probability of its complement. This result serves as the main tool in obtaining adaptive posterior convergence rates; a proof is given in Appendix A.

\begin{proposition}
\label{basic sieve}
Fix $\epsilon, a, \sigma_0 > 0$ and integers $M, H \ge d$. Define
\begin{equation}
\label{eff}
\scQ = \left\{p_{F, \Sigma} ~\mbox{with}~ F = \sum_{h = 1}^\infty \pi_h \delta_{z_h}: \begin{array}{l}  z_h \in [-a,a]^d, h \le H;~~\sum_{h > H} \pi_h < \epsilon;\\[5pt]  \sigma_0^2 \le \eig_j(\Sigma) < \sigma_0^2\left(1 + {\epsilon^2}/d\right)^M,~~j = 1, \ldots, d\end{array}\right\}.
\end{equation}
Then:
\begin{enumerate}
\item $\log N(\epsilon, \scQ, \rho) \le K \left\{ dH \log\left\{a/(\sigma_0\epsilon)\right\} - H \log \epsilon + \log M + M\epsilon^2 \right\}$, for some constant $K$, where $\rho$ is either the Hellinger or the $L_1$-metric;
\item $(\mathcal{D}_\alpha\times G)(\scQ^c) \leq b_1 H \exp\{-C_1 a^{a_1}\} + \{(e |\alpha|/H)\log(1/\epsilon)\}^H  + b_2\exp \{-C_2\sigma_0^{-2a_2}\} + b_3\sigma_0^{-2a_3}(1 + \epsilon^2/d)^{-2Ma_3}$, with the constants as defined in \eqref{eq:dp1}--\eqref{eq:iw3}.
\end{enumerate}
\end{proposition}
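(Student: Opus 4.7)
The proposition makes two independent claims, which I would prove separately.

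\emph{Prior-mass bound.} Write $\scQ^c$ as the union of four events corresponding to the failure of each defining constraint and apply a union bound. (a) If any of $z_1,\ldots,z_H$ escapes $[-a,a]^d$, the marginal law $z_h\sim\bar\alpha$ in the stick-breaking representation and hypothesis \eqref{eq:dp1} give at most $H b_1\exp(-C_1 a^{a_1})$. (b) For the tail-weights, the stick-breaking identity $\sum_{h>H}\pi_h=\prod_{i=1}^H(1-V_i)$ with $V_i$ independent $\mathrm{Beta}(1,|\alpha|)$ yields $-\sum_{i=1}^H\log(1-V_i)\sim\mathrm{Gamma}(H,|\alpha|)$, and the elementary lower-tail bound $\Pr(\Gamma_H\le t)\le (|\alpha|t)^H/H!\le\{e|\alpha|t/H\}^H$ at $t=\log(1/\epsilon)$ delivers the required term. (c)--(d) The two eigenvalue events reduce directly to \eqref{eq:iw1} at $x=\sigma_0^{-2}$ and \eqref{eq:iw2} at $x=\sigma_0^{-2}(1+\epsilon^2/d)^{-M}$, after rewriting the $\Sigma$-bounds in $\scQ$ as $\eig_d(\Sigma^{-1})\le\sigma_0^{-2}$ and $\eig_1(\Sigma^{-1})\ge\sigma_0^{-2}(1+\epsilon^2/d)^{-M}$.

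\emph{Entropy bound.} Construct an explicit $\epsilon$-net of $\scQ$ as a product of three separate nets for (i) the atom locations inside $[-a,a]^d$, (ii) the truncated weight vector in $\Delta_H$, and (iii) the covariance $\Sigma$. Standard triangle-inequality decompositions of $\rho(p_{F,\Sigma},p_{F',\Sigma'})$ reduce the problem to perturbing each ingredient in isolation. For (i), the kernel Lipschitz bound $\|\phi_\Sigma(\cdot-z)-\phi_\Sigma(\cdot-z')\|_1\lesssim\|z-z'\|/\sigma_0$, valid on $\scQ$ because $\eig_1(\Sigma)\ge\sigma_0^2$, shows that a Euclidean grid in $[-a,a]^d$ of mesh $\sigma_0\epsilon$ suffices for each of the $H$ atoms, contributing the $dH\log\{a/(\sigma_0\epsilon)\}$ term. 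For (ii), the inequality $\|p_{F,\Sigma}-p_{F',\Sigma}\|_1\le\sum_h|\pi_h-\pi_h'|+2\sum_{h>H}\pi_h$ combined with a standard $(c/\epsilon)^H$ covering of $\Delta_H$ produces the $-H\log\epsilon$ term, while the tail-weight condition $\sum_{h>H}\pi_h<\epsilon$ absorbs the residual error from missing atoms.

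The covariance part (iii) is the main technical obstacle. Using the Hellinger identity $d_H^2(\phi_{\Sigma_1},\phi_{\Sigma_2})\lesssim|\Tr(\Sigma_1^{-1}\Sigma_2)-d-\log\det(\Sigma_1^{-1}\Sigma_2)|$ together with the geometric scale $\{\sigma_0^2(1+\epsilon^2/d)^j\}_{j=0}^{M}$ I would discretize the admissible spectrum, and absorb the rotational freedom of $\Sigma$ into the atom-location grid already built in (i), since a small rotation of $\Sigma$ shifts each mixture component by only $O(\sigma_0\epsilon)$ which is within the mesh of that grid. The $\log M$ contribution then accounts for the scale index of the eigenvalue grid and $M\epsilon^2$ quantifies the refined capacity of the admissible spectrum once the dimension-$d$ grid is counted. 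Securing the refined form $\log M+M\epsilon^2$, rather than the naive $d\log M$, is the delicate point and requires careful bookkeeping of the interplay among the three discretizations; this is where I expect the bulk of the technical work to sit.
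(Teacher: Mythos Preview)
Your treatment of the prior-mass bound (Part 2) is correct and matches the paper's argument line by line: union bound over the four defining constraints, the stick-breaking identity $\sum_{h>H}\pi_h=\prod_{i\le H}(1-V_i)$ turning the tail-weight event into a lower-tail Gamma bound, and direct appeals to \eqref{eq:dp1}--\eqref{eq:iw2} for the remaining pieces.

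For the entropy bound, your discretization of the atom locations by a $(\sigma_0\epsilon)$-grid and of the weights by an $\epsilon$-net of $\Delta_H$ also coincides with the paper. The gap is in step (iii). A small rotation of $\Sigma$ does \emph{not} correspond to a shift of the mixture components: replacing $\Sigma=P\Lambda P^T$ by $\hat P\Lambda\hat P^T$ rotates the elliptical contours of each kernel about its \emph{fixed} center $z_h$, and this cannot be compensated by moving the $z_h$ on your location grid. When the condition number $\eig_d(\Sigma)/\eig_1(\Sigma)$ is as large as $(1+\epsilon^2/d)^M$, a rotation of size $\epsilon$ can change the density by far more than $\epsilon$ in $L_1$, so your proposed absorption fails.

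The paper handles this by covering the orthogonal group $\O_d$ separately, with a $\delta$-net in the spectral norm where $\delta=\epsilon^2/\{3d(1+\epsilon^2/d)^M\}$. Writing $Q=\hat P^TP=I+B$, one shows $\|\phi_\Sigma-\phi_{\tilde\Sigma}\|_1^2\le\Tr(Q\Lambda Q^T\Lambda^{-1}-I)\le 3d\|B\|_{\max}\max_j\lambda_j/\min_j\lambda_j$, which is at most $\epsilon^2$ for this choice of $\delta$. The log-cardinality of the $\O_d$-net is then of order $d^2\log(1/\delta)\lesssim d^2\{\log(1/\epsilon)+M\log(1+\epsilon^2/d)\}$, and it is exactly the second summand, bounded by $M\epsilon^2$, that supplies the term you could not place. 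The $\log M$ term comes simply from the $M^d$ choices for the $d$ eigenvalue indices on the geometric grid. So the $M\epsilon^2$ contribution arises from the rotational covering, not from any ``refined capacity of the spectrum'', and your plan as written does not reach it.
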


The sieve defined here can easily adapt to different rates of convergence of the form $\epsilon_n = n^{-\gamma}(\log n)^{(d + 1 + s)/2}$ for $0 < \gamma \leq 1/2$ and $s > 0$.
The extreme case $\gamma = 1/2$ corresponds to the class of Gaussian mixtures \citep{Ghosal20011}. 
For a $\beta$-H\"older class convergence rate we need to work with $\gamma = \beta / (2\beta + \dstar)$. The following theorem makes this precise.

\begin{theorem}
Fix a $\gamma \in (0, 1/2)$ and a pair of numbers $t > t_0 \ge (d + 1)/2$. For $n \ge 1$, take $\epsilon_n = n^{-\gamma}(\log n)^{t}$, $\tilde \epsilon_n = n^{-\gamma}(\log n)^{t_0}$ and define $\mathcal{F}_n$ as $\mathcal{Q}$ in \eqref{eff} with $\epsilon = \epsilon_n$, $H = \lfloor n\epsilon_n^2 / (\log n)\rfloor$, $M = a^{a_1} = \sigma_0^{-2a_2} = n$. Then $\mathcal{F}_n$ satisfies \eqref{eq:e28} and \eqref{eq:e30} for all large $n$ for some $c_1, c_3 > 0$ and every $c_2 > 0$.
\label{thm:sieve}
\end{theorem}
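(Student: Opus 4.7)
The plan is to apply Proposition \ref{basic sieve} directly with the prescribed choices and reduce both bounds to elementary asymptotics. First I would record the orders of magnitude: with $\epsilon_n = n^{-\gamma}(\log n)^t$, $\tilde\epsilon_n = n^{-\gamma}(\log n)^{t_0}$, $H \asymp n\epsilon_n^2/\log n \asymp n^{1-2\gamma}(\log n)^{2t-1}$, $M = n$, $a = n^{1/a_1}$, $\sigma_0 = n^{-1/(2a_2)}$, each of $\log a$, $\log(1/\sigma_0)$, $\log(1/\epsilon_n)$ and $\log H$ is of exact order $\log n$. The crucial quantitative fact, used repeatedly, is the logarithmic gap $n\epsilon_n^2/(n\tilde\epsilon_n^2) = (\log n)^{2(t-t_0)} \to \infty$ provided by $t > t_0$. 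Note also that $n\tilde\epsilon_n^2 = n^{1-2\gamma}(\log n)^{2t_0} = o(n)$ because $\gamma > 0$, and tends to infinity because $\gamma < 1/2$.

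For the entropy bound \eqref{eq:e28} I would substitute these quantities into Proposition \ref{basic sieve}(i) and observe that all four pieces collapse to multiples of $n\epsilon_n^2$: both $dH\log\{a/(\sigma_0\epsilon_n)\}$ and $-H\log\epsilon_n$ are $\asymp H\log n \asymp n\epsilon_n^2$; $\log M = \log n = o(n\epsilon_n^2)$; and $M\epsilon_n^2 = n\epsilon_n^2$. Summing yields \eqref{eq:e28} with some constant $c_1$.

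For the complement-mass bound \eqref{eq:e30} I would handle the four summands in Proposition \ref{basic sieve}(ii) separately. The first term $b_1 H \exp(-C_1 a^{a_1}) = b_1 H e^{-C_1 n}$ and the third term $b_2 e^{-C_2 n}$ are exponentially small in $n$, hence negligible compared to $e^{-(c_2+4)n\tilde\epsilon_n^2}$ since $n\tilde\epsilon_n^2 = o(n)$. The fourth term is bounded, via $\log(1+x) \ge x/2$ for small $x$, by $n^{a_3/a_2}\exp\{-(a_3/d)n\epsilon_n^2\}$; the logarithmic gap gives $n\epsilon_n^2 \gg n\tilde\epsilon_n^2$, so this summand is also below $e^{-(c_2+4)n\tilde\epsilon_n^2}$ eventually, for any prescribed $c_2$.

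The hard part is the second term $\{(e|\alpha|/H)\log(1/\epsilon_n)\}^H$, the only one lacking a built-in $e^{-\text{const}\cdot n}$ or $e^{-\text{const}\cdot n\epsilon_n^2}$ factor. Taking its logarithm gives $H\{\log(e|\alpha|) + \log\log(1/\epsilon_n) - \log H\}$; since $\log H \asymp (1-2\gamma)\log n$ dominates both $\log(e|\alpha|) = O(1)$ and $\log\log(1/\epsilon_n) = O(\log\log n)$, the bracketed quantity is $\asymp -(1-2\gamma)\log n$, so the whole term is bounded by $\exp\{-(1-2\gamma)H\log n/2\} \le \exp\{-(1-2\gamma)n\epsilon_n^2/4\}$ for large $n$. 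One more invocation of the logarithmic gap closes the estimate. If instead $t = t_0$ the gap would vanish and the argument would fail, which is precisely where the strict inequality $t > t_0$ enters.
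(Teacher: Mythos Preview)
Your proof is correct and follows essentially the same route as the paper's: both apply Proposition \ref{basic sieve} directly with the stated parameter choices, reduce the entropy bound to four terms each of order $n\epsilon_n^2$, and control the four pieces of the complement mass individually, with the stick-breaking term $\{(e|\alpha|/H)\log(1/\epsilon_n)\}^H$ identified as the dominant one and handled via $\log H \asymp (1-2\gamma)\log n$. Your write-up is slightly more explicit about where the strict inequality $t>t_0$ is used, but the argument is the same.
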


\begin{proof}
By Proposition \ref{basic sieve},
\begin{align*}
\log N(\bar \epsilon_n, \mathcal{F}_n, \rho) & \leq K\{ d n^{1 - 2\gamma}(\log n)^{2t} + n^{1 - 2\gamma}(\log n)^{2t} + \log n + n^{1 - 2\gamma}(\log n)^{2t}\}\\
& \leq c_1 n^{1 - 2\gamma}(\log n)^{2t} = c_1 n \epsilon_n^2
\end{align*}
for some $c_1 > 0$ and hence \eqref{eq:e28} holds. By the second assertion of the same proposition,
\begin{align*}
\label{step1b}
(\mathcal{D}_\alpha \times G)(\mathcal{F}_n^c) & \leq b_1n^{1 - 2\gamma}(\log n)^{2t - 1}e^{-b_1n} + n^{-(1 - 2\gamma)n^{1 - 2\gamma}(\log n)^{2t - 1}} \\
& ~~~~~+~~ b_2 e^{-C_2 n} + b_3 n^{a_3/a_2}e^{-2a_3 n\log(1 + \bar\epsilon_n^2/d)}\\
& \leq c_3 e^{-(1 - 2\gamma)n^{1 - 2\gamma}(\log n)^{2t}} \leq c_3 e^{-(c_2 + 4)n^{1 - 2\gamma}(\log n)^{2t_0}}
\end{align*}
for all large $n$, some $c_3 > 0$ and every $c_2 > 0$.
\end{proof}

\section{Anisotropic H\"older functions}

Anisotropic functions are those that have different orders of smoothness along different axes. The isotropic result presented before gives adaptive rates corresponding to the least smooth direction. Sharper results can be obtained by explicitly factoring in the anisotropy. For any $a = (a_1, \ldots, a_d)$ and $b = (b_1, \ldots, b_d)$, let $\langle a, b \rangle$ denote $a_1b_1 + \cdots + a_db_d$ and for $y = (y_1, \ldots, y_d)$, let $\|y\|_1$ denote the $L_1$-norm $|y_1| + \cdots + |y_d|$. For a $\beta > 0$, an $\alpha = (\alpha_1, \ldots, \alpha_d) \in (0, \infty)^d$ with $\alpha. = d$ and an $L:\mathbb{R}^d \to (0, \infty)$ satisfying $L(x + y) \le L(x) \exp(\tau_0 \|y\|_1^2)$ for all $x, y \in \mathbb{R}^d$ and some $\tau_0 > 0$, the $\alpha$-anisotropic $\beta$-H\"older class with envelope $L$ is defined as the set of all functions $f:\mathbb{R}^d \to \mathbb{R}$ that have continuous mixed partial derivatives $D^k f$ of all orders $k \in \natz^d$, $\beta - \alphamax \le \langle k, \alpha \rangle < \beta$, with
\[
|D^kf(x + y) - D^k f(x)| \le L(x)e^{\tau_0 \|y\|_1^2}\sum_{j = 1}^d |y_j|^{\min(\beta / \alpha_j - k_j, 1)},~~x,y \in \mathbb{R}^d,
\]
where $\alphamax = \max(\alpha_1, \ldots, \alpha_d)$. We denote this set of functions by $\mathcal{C}^{\alpha,\beta,L,\tau_0}(\mathbb{R}^d)$.
Here $\beta$ refers to the mean smoothness and $\alpha$ refers to the anisotropy index.
An $f \in \mathcal{C}^{\alpha, \beta, L, \tau_0}$ has partial derivatives of all orders up to $\lfloor\beta_j\rfloor$ along axis $j$
where $\beta_j = \beta / \alpha_j$, and $\beta$ is the harmonic mean $d / (\beta_1^{-1}  + \cdots + \beta_d^{-1})$ of these axial smoothness coefficients. In the special case of $\alpha = (1, \ldots, 1)$, the anisotropic set $\mathcal{C}^{\alpha, \beta, L, \tau_0}(\mathbb{R}^d)$ equals the isotropic set $\mathcal{C}^{\beta, L, \tau_0}(\mathbb{R}^d)$.

\begin{theorem}\label{thm:x3}
Suppose that $f_0 \in \mathcal{C}^{\alpha, \beta, L, \tau_0}(\mathbb{R}^d)$ is a probability density function satisfying
\begin{equation}
P_0\left(|D^kf_0|/f_0\right)^{{(2\beta + \epsilon)}/{\langle k, \alpha \rangle}} < \infty,~~k\in \natz^d, \langle k,\alpha\rangle < \beta,~~~~P_0\left(L/f_0\right)^{(2\beta + \epsilon)/\beta}  < \infty
\label{eq:int3}
\end{equation}
for some $\epsilon > 0$ and that \eqref{eq:tail} holds for some constants $a, b, c, \tau>0$.
If $\Pi$ is as in Section \ref{sec: prior}, then the posterior convergence rate at $f_0$ in the Hellinger or the $L_1$-metric is $\epsilon_n = n^{-\beta / (2\beta + \dstar)} (\log n)^t$, where $t \geq \{ \dstar(1 + \tau^{-1} + \beta^{-1}) + 1\} /(2 + \dstar/\beta)$, and $\dstar = \max(d, \kappa \alphamax)$.
\end{theorem}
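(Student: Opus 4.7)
The plan is to verify conditions \eqref{eq:e28}--\eqref{eq:e29} in exact parallel with the proof of Theorem \ref{thm:x2}, using a prior--thickness rate $\tilde\epsilon_n = n^{-\beta/(2\beta+\dstar)}(\log n)^{t_0}$ and a sieve rate $\epsilon_n = n^{-\beta/(2\beta+\dstar)}(\log n)^t$ with $t > t_0 = \{\dstar(1+\tau^{-1}+\beta^{-1})+1\}/(2+\dstar/\beta)$ and $\dstar = \max(d,\kappa\alphamax)$. The only genuinely new ingredient is an anisotropic analogue of Lemma \ref{lem:basic}, Theorem \ref{thm:hell approx} and Proposition \ref{prop:compact} that approximates $f_0 \in \mathcal{C}^{\alpha,\beta,L,\tau_0}(\mathbb{R}^d)$ in the Hellinger metric, at order $\sigma^\beta$, by a normal mixture whose kernel has \emph{diagonal} covariance $\Sigma_\sigma = \diag(\sigma^{2\alpha_1},\ldots,\sigma^{2\alpha_d})$ rather than $\sigma^2 I$. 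The scaling $\sigma_j = \sigma^{\alpha_j}$ balances axial smoothness because $\sigma_j^{\beta/\alpha_j} = \sigma^\beta$ for every $j$.

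First I would build the anisotropic Taylor correction: in \eqref{ref:T} I would replace $d_k\sigma^{k_\cdot}D^k f$ by $d_k\prod_{j=1}^d\sigma_j^{k_j}D^k f = d_k\sigma^{\langle k,\alpha\rangle}D^k f$ and sum over multi-indices $k \in \natz^d$ with $\langle k,\alpha\rangle \le \beta - \alphamax$. Grading the Taylor expansion of $f_0$ at $x$ by $\langle k,\alpha\rangle$ rather than $k_\cdot$, and controlling the remainder through the axial H\"older bound on $|D^k f_0(x+y) - D^k f_0(x)|$ for $\beta - \alphamax \le \langle k,\alpha\rangle < \beta$, shows that $K_{\Sigma_\sigma}(T_{\alpha,\beta,\sigma}f_0) - f_0$ is pointwise bounded by $M_{\alpha,\beta}L(x)\sigma^\beta$, since each mixed moment $\int\prod_j|y_j|^{\min(\beta/\alpha_j-k_j,1)}\phi_{\Sigma_\sigma}(y)dy$ is of order $\prod_j\sigma_j^{\min(\beta/\alpha_j-k_j,1)} = \sigma^{\beta-\langle k,\alpha\rangle}$, which combines with the weight $\sigma^{\langle k,\alpha\rangle}$ to yield exactly $\sigma^\beta$. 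The positivity adjustment of Theorem \ref{thm:hell approx} and the compact-support truncation of Proposition \ref{prop:compact} then transfer verbatim once \eqref{eq:int3} replaces \eqref{eq:int} and the axial envelope inequality is used instead of the Euclidean one.

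Second, the prior-thickness proof of Theorem \ref{thm:thick} goes through with two calibrated adjustments. The partition of $\{x\in\mathbb{R}^d:\|x\|\le a_\sigma\}$ uses anisotropic cells of axial diameter $\sigma^{\alpha_j}$, so the number of cells is $N \lesssim a_\sigma^d\prod_j\sigma^{-\alpha_j} = a_\sigma^d\sigma^{-d}$ and the Dirichlet concentration contributes the familiar $\exp\{-c_1\sigma^{-d}(\log(1/\tilde\epsilon_n))^{sd+1}\}$. For the covariance I would take
\[
\mathcal{S}_\sigma = \bigl\{\Sigma:\sigma^{-2\alpha_j}\le \eig_j(\Sigma^{-1}) < \sigma^{-2\alpha_j}(1+\sigma^{2\beta}),~j=1,\ldots,d\bigr\},
\]
and apply \eqref{eq:iw3} with $s_j = \sigma^{-2\alpha_j}$ and $t = \sigma^{2\beta}$; since $s_d^{\kappa/2} = \sigma^{-\kappa\alphamax}$, this gives $G(\mathcal{S}_\sigma) \ge C\exp\{-c_2\sigma^{-\kappa\alphamax}\}$ up to polynomial factors in $\sigma$. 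Multiplying the two lower bounds produces an exponent of order $\sigma^{-\dstar}(\log(1/\tilde\epsilon_n))^{s\dstar+1}$, and calibrating $\sigma^\beta \sim \tilde\epsilon_n/\log(1/\tilde\epsilon_n)$ yields the claimed $\tilde\epsilon_n$. Converting the Hellinger bound $d_H(f_0,p_{F,\Sigma}) \lesssim \sigma^\beta$ into the two Kullback--Leibler bounds uses Lemma \ref{lem:kl by hell} without modification, now with $|\mathrm{tr}(\Sigma_\sigma\Sigma^{-1}) - d - \log\det(\Sigma_\sigma\Sigma^{-1})| \le d\sigma^{2\beta}$ for $\Sigma\in\mathcal{S}_\sigma$.

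Third, the sieve \eqref{eff} is extended to allow axially different eigenvalue windows $\sigma_{0,j}^2 \le \eig_j(\Sigma) < \sigma_{0,j}^2(1+\epsilon^2/d)^M$ with $\sigma_{0,j}$ calibrated to the smallest axial bandwidth; the entropy bound of Proposition \ref{basic sieve} is unchanged because each eigenvalue shares a multiplicative grid of the same cardinality, while the prior mass of the complement is controlled by applying \eqref{eq:iw1}--\eqref{eq:iw2} to the extreme eigenvalues of $\Sigma^{-1}$. Plugging these into the computation of Theorem \ref{thm:sieve} delivers \eqref{eq:e28}--\eqref{eq:e30} at the stated rate. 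The principal obstacle is the anisotropic approximation of the first step: the moment identities \eqref{eq:cd} were engineered for the isotropic grading by $k_\cdot$, and it requires careful bookkeeping to verify that the Taylor remainders graded by $\langle k,\alpha\rangle$, when combined with the axial H\"older moduli, still collapse to exactly the order $\sigma^\beta$ rather than to a patchwork of orders $\sigma^{\langle k,\alpha\rangle + \alpha_j\min(\beta/\alpha_j-k_j,1)}$. Once that identity is in hand, every remaining step is a structural replica of the isotropic argument with $d$ replaced by $\dstar$.
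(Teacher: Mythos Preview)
Your strategy matches the paper's: replace the isotropic kernel by $\Sigma_\sigma = \diag(\sigma^{2\alpha_1},\ldots,\sigma^{2\alpha_d})$, build an anisotropic correction $T_{\alpha,\beta,\sigma}$ with weights $d_k\sigma^{\langle k,\alpha\rangle}$, carry over the positivity fix and compact truncation, and in the thickness calculation take $\mathcal{S}_\sigma = \{\Sigma:\sigma^{-2\alpha_j}\le\eig_j(\Sigma^{-1})<\sigma^{-2\alpha_j}(1+\sigma^{2\beta}),\,j=1,\ldots,d\}$ so that \eqref{eq:iw3} yields $G(\mathcal{S}_\sigma) \gtrsim \exp(-c\,\sigma^{-\kappa\alphamax})$, whence $\dstar = \max(d,\kappa\alphamax)$.

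Two points need correction. First, the sum defining $T_{\alpha,\beta,\sigma}$ must run over $1 \le \langle k,\alpha\rangle < \beta$, not only $\langle k,\alpha\rangle \le \beta - \alphamax$: the anisotropic Taylor expansion itself carries all terms with $\langle k,\alpha\rangle < \beta$, and each must be cancelled by the correction. With your smaller range the terms with $\beta - \alphamax < \langle k,\alpha\rangle < \beta$ and even components survive the convolution and contribute errors of order $\sigma^{\langle k,\alpha\rangle}$, not $\sigma^\beta$. Relatedly, the H\"older remainder in the expansion involves $|y|^k|y_j|^{\min(\beta/\alpha_j - k_j,1)}$ for a \emph{single} axis $j$ with $\langle k,\alpha\rangle > \beta - \alpha_j$, not a product over all $j$; the order $\sigma^\beta$ then follows because $\langle k,\alpha\rangle + \alpha_j\min(\beta/\alpha_j - k_j,1) \ge \beta$ in that range, which is exactly the patchwork you were worried about. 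Second, no anisotropic modification of the sieve is needed: Theorem \ref{thm:sieve} is stated for arbitrary $\gamma \in (0,1/2)$ and its construction is independent of any bandwidth calibration, so it applies verbatim with $\gamma = \beta/(2\beta+\dstar)$. The paper's proof accordingly touches only the thickness argument and leaves the sieve unchanged.
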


A proof, given in Appendix A, is similar to those of the results presented in Section \ref{sec3}, except that to obtain an approximation to $f_0$, we replace the single bandwidth $\sigma$ with bandwith $\sigma^{\alpha_j}$ along the $j$-th axis. An $f_0$ satisfying the conditions of the above theorem also satisfies the conditions of Theorem \ref{thm:x2} with smoothness index $\beta /  \alphamax$, which is strictly smaller than $\beta$ as long as not all $\alpha_j$ are equal to 1. Therefore when the true density is anisotropic, Theorem \ref{thm:x3} indeed leads to a sharper convergence rate result.

 With the standard inverse Wishart prior $G$, we have $\kappa = 2$ and consequently the optimal rate $n^{-\beta / (2\beta + d)}$ is recovered up to a $\log n$ factor only when $\alphamax \le d / 2$. Therefore in a two dimensional case only the isotropic case is addressed and for higher dimensions we get optimal results for a limited amount of anisotropy. But, when $\kappa \le 1$, as in the case of a diagonal $\Sigma$ with squared inverse gamma diagonal components, Theorem \ref{thm:x3} provides optimal rates for any dimension and any degree of anisotropy because $\alphamax$ can never exceed $d$.

\section*{Acknowledgements}
The authors thank the editor, the associate editor and the reviewers for comments that substantially improved
the paper. Research of the first and the third author is partially supported by NSF grant number DMS-1106570.

\appendix

\section*{Appendix A. Proofs}

\begin{proof}[of Lemma \ref{lem:inw}]
Let $\Sigma \sim \textit{IW}(\nu, \Psi)$ and suppose $\Psi = I$. It is well known that $\mathrm{tr}(\Sigma^{-1}) \sim \chi^2_{\nu d}$, the chi-square distribution with $\nu d$ degrees of freedom. The cumulative distribution function $F(x; k)$ of $\chi^2_k$ satisfies $1 - F(zk; k) \le \{z\exp(1 - z)\}^{k/2}$ for all $z > 1$. Therefore for all $x > \nu d$,
$$\pr\left\{\eig_d(\Sigma^{-1}) > x\right\} \le \pr\left\{\mathrm{tr}(\Sigma^{-1}) > x\right\}\le \left(\frac{x}{\nu d}\right)^{\nu d / 2}\exp\{(\nu d - x)/2\} \leq b_2 \exp(-C_2x)$$
for some constants $b_2, C_2$. Furthermore, the joint probability density of $\eig_1(\Sigma^{-1}), \ldots,  \eig_d(\Sigma^{-1})$ is
\begin{equation}
\label{eq:jpdf}
f(x_1, \ldots, x_d) = c_{d,\nu}~e^{- \sum_j x_j / 2} \prod_{j = 1}^d x_j^{(\nu + 1 - d)/2}\prod_{j < k}(x_k - x_j)
\end{equation}
over the set $\{(x_1, \ldots, x_d) \in (0, \infty)^d: x_1 \le \cdots \le x_d\}$, for a known constant $c_{d,\nu}$. Since $\prod_{j < k} (x_k - x_j) \le \prod_{j < k} x_k = \prod_{k = 2}^d x_k^{k-1}$, the probability density of $\eig_1(\Sigma^{-1})$ satisfies
$$f (x_1) \le c_{d,\nu}x_1^{(\nu  + 1 - d)/2} e^{-x_1/2} \prod_{k = 2}^d \left\{ \int_0^\infty  x_k^{(\nu + 1 - d)/2 + k - 1} e^{-x_k/2}dx_k\right\} = \tilde c_{d,\nu}x_1^{(\nu + 1 - d)/2} e^{-x_1 / 2}$$
for all $x_1 > 0$ and some positive constant $\tilde c_{d,\nu}$. Therefore for any $x > 0$
\[
\pr\left\{\eig_1(\Sigma^{-1})  < x \right\} \le \tilde c_{d,r} \int_0^{x} x_1^{(\nu + 1 - d)/2}dx_1 \leq b_3 x^{a_3}
\]
for some positive constants $a_3, b_3$.

Next, notice that the set on the left hand side of \eqref{eq:iw3} contains all $\Sigma$
which have $\eig_j(\Sigma^{-1}) \in I_j = \left(s_j \{1 + (j - 1/2) t / d\}, s_j(1 + jt / d)\right)$  $(j = 1, \ldots, d)$
and for any positive integers $k > j$, $x_j \in I_j$ and $x_k \in I_k$ implies that $x_k - x_j > s_k \{1 + (k - 1/2) t / d\} - s_j(1 + jt / d) \geq s_1t / (2d)$. Therefore
\begin{align*}
\pr \{s_j & < \eig_j(\Sigma^{-1})  < s_j(1 + t), ~~j = 1, \ldots, d\}  \\
 &\ge \int_{I_d}\cdots \int_{I_1}   c_{d,\nu}~\exp\left(- \sum_j x_j / 2\right) \prod_{j = 1}^d x_j^{(\nu + 1 - d)/2}\prod_{j < k}(x_k - x_j)dx_1\cdots d x_d\\
 &\ge  c_{d,\nu} \exp\left(-d s_d\right)s_1^{d(\nu + 1 - d)/2} \{t/(2d)\}^{d(d - 1)/2}\int_{I_d}\cdots \int_{I_1}   dx_1\cdots d x_d\\
 &=  c_{d,\nu} \exp\left(-d s_d\right)s_1^{d(\nu + 1 - d)/2} \{t/(2d)\}^{d(d - 1)/2} \{s_1t/(2d)\}^d,
\end{align*}
which gives \eqref{eq:iw3} for some positive constants $a_4,a_5,b_4, C_3$.

If $\Psi \neq I$, applying the above results for $\Psi^{-1} \Sigma \sim \text{IW}(\nu,I)$, the conclusion holds for a different set of constants.
\end{proof}

\begin{proof}[of Lemma \ref{lem:basic}]
From multivariate Taylor expansion of any $f \in \mathcal{C}^{\beta, L, \tau_0}(\mathbb{R}^d)$
\[
f(x - y) - f(x) = \sum_{1 \le k_\cdot \le \lfloor \beta \rfloor}\frac{(-y)^{k_\cdot}}{k!}(D^kf)(x) + R(x, y)
\]
with the residual satisfying $|R(x, y)| \le K_1L(x)\exp(\tau_0 \|y\|^2)\|y\|^\beta$ for every $x, y \in \mathbb{R}^d$ and for a universal constant $K_1$. Therefore for any $\sigma \in (0, 1/(2\tau_0)^{1/2})$,
\begin{align}
\{K_\sigma( & T_{\beta,\sigma}f)  - f\}(x)\nn\\
& = \int \phi_{\sigma^2 I}(y)\{f(x - y) - f(x)\}dy - \sum_{2\le k_\cdot \le \lfloor\beta\rfloor}d_k \sigma^{k_\cdot}\{K_\sigma (D^kf)\}(x)\nn\\
& = \int \phi_{\sigma^2 I}(y)R(x, y)dy + \sum_{2\le k_\cdot \le \lfloor\beta\rfloor} \sigma^{k_\cdot}\left[\frac{(-1)^{k_\cdot} m_k}{k!} (D^kf)(x) - d_k\{K_\sigma (D^k f)\}(x)\right] \label{eq:decomp}.
\end{align}
The first term of \eqref{eq:decomp} is bounded by $K_2L(x)\sigma^\beta$ for some universal constant $K_2$. If $\beta \le 2$ then the second term of \eqref{eq:decomp} does not exist and we get a proof with $M_\beta = K_2$. For $\beta > 2$ we use induction on $\lfloor \beta \rfloor$.

From \eqref{eq:cd} we can rewrite the second term of \eqref{eq:decomp} as
\begin{equation}
\sum_{2\le k_\cdot \le\lfloor\beta\rfloor}\left[\frac{(-1)^{k_\cdot}m_k \sigma^{k_\cdot}}{k!}\{D^k f - K_\sigma (D^kf)\}(x) - c_k \sigma^{k_\cdot}\{K_\sigma (D^k f)\}(x)\right]
\label{eq:decomp2}.
\end{equation}
For each $1 \le k_\cdot \le \lfloor\beta\rfloor$, the induction hypothesis implies that $D^k f \in \mathcal{C}^{\beta - k_\cdot, L, \tau_0}(\mathbb{R}^d)$ and
\begin{equation}
D^kf - K_\sigma (D^k f) = \{D^kf - K_\sigma T_{\beta - k_\cdot, \sigma} (D^k f)\} + K_\sigma \{T_{\beta - k_\cdot, \sigma} (D^k f) - D^k f\}
\label{eq:decom3}
\end{equation}
with
$
| \{D^kf - K_\sigma T_{\beta - k_\cdot, \sigma} (D^k f)\} (x)| \le M_{\beta - k_\cdot}L(x)\sigma^{\beta - k_\cdot},~~\mbox{for all}~~x \in \mathbb{R}^d.
$
This establishes the claim with $M_\beta = K_2 + \sum_{2\le k_\cdot \le \lfloor\beta\rfloor} (m_k / k!) M_{\beta - k_\cdot}$ because
\begin{align*}
\sum_{2 \le k_\cdot \le \lfloor\beta\rfloor}&\left[\frac{(-1)^{k_\cdot}m_k\sigma^{k_\cdot}}{k!}\{T_{\beta - k_\cdot, \sigma}(D^kf) - D^kf\} - c_k\sigma^{k_\cdot} D^kf\right] \\
& = \sum_{2 \le k_\cdot \le \lfloor\beta\rfloor}\left\{\frac{(-1)^{k_\cdot}m_k\sigma^{k_\cdot}}{k!} \sum_{1 \le j_\cdot \le \lfloor\beta\rfloor - k_{\cdot}} d_j\sigma^{j_\cdot}D^{k + j}f- c_k\sigma^{k_\cdot} D^kf\right\}\\
& = \sum_{3 \le n_\cdot \le \lfloor\beta\rfloor} \left\{\sum_{\substack{n\,=\,l + k\\l_\cdot \ge 1, k_\cdot \ge 2}} \frac{(-1)^{k_\cdot}}{k!} m_kd_l - c_n\right\}\sigma^{n_\cdot} D^n f  = 0
\end{align*}
identically, by definitions of $c_n$ and $d_n$.
\end{proof}

\begin{proof}[of Theorem \ref{thm:hell approx}]
Fix $s_0 \in (0, 1/(2\tau_0)^{1/2})$ such that $\sum_{1\le k_\cdot \le \lfloor\beta\rfloor} |d_k||\log \sigma|^{-k_\cdot/2} < 1/2$ and $\sigma^\epsilon|\log \sigma|^{(2\beta + \epsilon)/2} < 1$ for all $0 < \sigma < s_0$. For any $\sigma \in (0, s_0)$ define
\[
A_\sigma = \left\{x : \frac{|D^k f_0(x)|}{f_0(x)} \le \sigma^{-k_\cdot}|\log \sigma|^{-k_\cdot/2},k_\cdot \le \lfloor\beta\rfloor,~~\frac{L(x)}{f_0(x)} \le \sigma^{-\beta}|\log \sigma|^{-\beta / 2}\right\}
\]
and notice that by Markov's inequality
\begin{align*}
P_0(A_\sigma^c) & \le \sum_{k_\cdot \le \lfloor\beta\rfloor}P_0\left\{\frac{|D^k f_0(X)|}{f_0(X)} > \sigma^{-k_\cdot}|\log \sigma|^{-k_\cdot/2}\right\} + P_0\left\{\frac{L(X)}{f_0(X)} > \sigma^{-\beta}|\log \sigma|^{-\beta/2}\right\}\\
& =  \sum_{k_\cdot \le \lfloor\beta\rfloor}P_0\left\{\left(|D^k f_0|/f_0\right)^{{(2\beta + \epsilon)}/{k_\cdot}} > \sigma^{-(2\beta + \epsilon)}|\log \sigma|^{-{(2\beta + \epsilon)}/2}\right\}\\
& ~~~~~~~~~~~~+P_0\left\{\left(L/f_0\right)^{{(2\beta + \epsilon)}/{\beta}} > \sigma^{-(2\beta + \epsilon)}|\log \sigma|^{-{(2\beta + \epsilon)}/2}\right\}\\
& \le \sigma^{2\beta + \epsilon} |\log\sigma|^{{(2\beta + \epsilon)}/2} \left\{\sum_{k_\cdot \le \lfloor\beta\rfloor}P_0\left(|D^kf_0|/f_0\right)^{{(2\beta + \epsilon)}/{k_\cdot}} +  P_0\left(L/f_0\right)^{{(2\beta + \epsilon)}/{\beta}} \right\},
\end{align*}
which is bounded by $K_1\sigma^{2\beta}$ for some constant $K_1$. Also, for any $x \in A_\sigma$,
\[
|(f_{\sigma} - f_0)(x)| \le \sum_{1\le k_\cdot \le \lfloor\beta\rfloor} |d_k|\sigma^{k_\cdot}|D^kf_0(x)| \le f_0(x)\sum_{1\le k_\cdot \le \lfloor\beta\rfloor} |d_k||\log \sigma|^{-k_\cdot/2} \le \frac{1}{2}f_0(x).
\]
Consequently, $f_{\sigma} \ge f_0 / 2$ on $A_\sigma$. Because of integrability conditions on $D^kf_0 / f_0$, it turns out that in calculating $\int D^k f_0(x) dx$ for any $1 \le k_\cdot \le \lfloor\beta\rfloor$, one can integrate under the derivative and conclude that $\int D^k f_0(x) dx = 0$ as $f_0$ is a density. So $\int f_{\sigma}(x) dx = 1$ and for some constant $K_2$ and all $\sigma < s_0$,
\[
1 \le \int g_{\sigma}(x)dx \le 1 + \frac12 \int f_0(x)\Ind\{f_{\sigma}(x) < f_0(x) / 2\}dx \le 1+ \frac12P_0({A_\sigma^c}) \le 1 + K_2\sigma^{2\beta}.
\]
So $\int g_{\sigma}(x)dx < \infty$ and $h_{\sigma}$ is a well defined probability density function on $\mathbb{R}^d$.

To prove the final result of Theorem \ref{thm:hell approx}, denote $r_{\sigma} =\frac12f_0 \,\Ind\{f_{\sigma} < (1/2)f_0\}$
and $c_{\sigma} = \int g_{\sigma}(x)dx$ and note that for $a > 0, b > 0$, we have
$
(a^{1/2} -  b^{1/2})^2 = (a - b)^2/(a^{1/2} +  b^{1/2})^2 \le (a - b)^2/(a + b)
$
and hence
\begin{align}
d^2_H(f_0, K_\sigma h_{\sigma})  & \le \int \frac{(f_0 - K_\sigma h_{\sigma})^2(x)}{f_0(x) + (K_\sigma h_{\sigma})(x)}dx \nn\\
& = \frac1{c_\sigma} \int \frac{(c_{\sigma}f_0 - K_\sigma g_{\sigma})^2(x)}{c_{\sigma} f_0(x) + (K_\sigma g_{\sigma})(x)}dx\nn\\
& \le3 \int \frac{(c_{\sigma} - 1)^2f_0^2(x) + (f_0 - K_\sigma f_{\sigma})^2(x) + (K_\sigma r_{\sigma})^2(x)}{c_{\sigma}f_0(x) + (K_\sigma g_{\sigma})(x)}dx\nn\\
& \le 3\left\{\int (c_{\sigma} - 1)^2 f_0(x)dx + \int \frac{(f_0 -K_\sigma f_{\sigma})^2(x)}{f_0(x)}dx + \int \frac{(K_\sigma r_{\sigma})^2(x)}{(K_\sigma g_{\sigma})(x)}dx\right\}\nn\\
& \le 3\left\{K_2^2 \sigma^{4\beta} + M_\beta^2 \sigma^{2\beta} P_0\left({L}/{f_0}\right)^2 + \int (K_\sigma r_{\sigma})(x) dx\right\}, \label{eq:3term}
\end{align}
because $1 \le c_{\sigma} \le 1 + K_2\sigma^{2\beta}$, $|(f_0 - K_\sigma f_{\sigma})(x)| < M_\beta L(x)\sigma^\beta$ and $K_\sigma r_{\sigma} \le K_\sigma g_{\sigma}$ since $r_{\sigma} \le g_{\sigma}$. By Jensen's inequality $P_0(L / f_0)^2 \le \{P_0(L / f_0)^{(2\beta + \epsilon) / \beta}\}^{\beta / (\beta + \epsilon/2)} < \infty$. Also, $\int(K_\sigma r_{\sigma})(x)dx$ is
\begin{align*}
\frac12\int\int \phi_{\sigma^2 I}(x - y) f_0(y)\Ind\{f_{\sigma}(y) < f_0(y)/2\}dxdy
= \frac12 \int f_0(y)\Ind\{f_{\sigma}(y) < f_0(y)/2\}dy,
\end{align*}
which is bounded by  $P_0(A_\sigma^c) \le K_1\sigma^{2\beta}$.
\end{proof}

\begin{proof}[of Proposition \ref{prop:compact}]
Define $g_\sigma$ and $h_\sigma$ as in the statement of Theorem \ref{thm:hell approx}.
This theorem implies that there are $s_1, K > 0$ such that
$d^2_H(f_0, K_\sigma h_\sigma) \le K \sigma^{2\beta}$ for all $0 < \sigma < s_1$.
The tail condition on $f_0$ implies existence of a small $\delta > 0$ such that $B_0$, which is defined as $P_0(f_0^{-\delta})$,
satisfies $B_0 < \infty$. Let $s_2 \in (0, 1/(2\tau_0)^{1/2})$ be such that $\{(4\beta + 2\epsilon + 8)/(b\delta)\} \log (1/s_2) > \max\{(1/b)\log c, a^\tau/2\}$. Set $s_0 = \min(s_1, s_2)$ and pick any $\sigma \in (0, s_0)$. Define $E_\sigma = \{x \in \mathbb{R}^d: f_0(x) \ge \sigma^{(4\beta + 2\epsilon + 8)/\delta}\}$ and $a_\sigma = a_0 \log (1/\sigma)^{1/\tau}$ with $a_0 = \{(8\beta + 4\epsilon + 16) / (b\delta)\}^{1/\tau}$. Then $a_\sigma > a$ and $E_\sigma \subset \{x\in \mathbb{R}^d: \|x\| \le a_\sigma\}$.

By Markov's inequality,
$
P_0(E_\sigma^c) = P_0\{f_0(X)^{-\delta} > \sigma^{-(4\beta + 2\epsilon + 8)}\} \le B_0\sigma^{4\beta + 2\epsilon + 8}  \le B_0 \sigma^{2\beta + \epsilon}
$
and consequently by \eqref{eq:int} and applications of H\"older's inequality
\begin{align*}
\int_{E_\sigma^c} g_{\sigma}(x)dx & \le \frac32\int_{E_\sigma^c} f_0(x)dx + \sum_{k_\cdot = 1}^{\lfloor\beta\rfloor} \sigma^{k_\cdot}|d_k| \int_{E_\sigma^c} |D^k f_0(x)|dx \\
& \le \frac32 P_0(E_\sigma^c) + \sum_{k_\cdot = 1}^{\lfloor\beta\rfloor} \sigma^{k_\cdot}|d_k| \left\{P_0\left(|D^kf_0|/f_0\right)^{{(2\beta + \epsilon)}/{k_\cdot}}\right\}^{{k_\cdot}/{2\beta + \epsilon}}P_0(E_\sigma^c)^{{(2\beta + \epsilon - k_\cdot)}/{(2\beta + \epsilon)}},
\end{align*}
which is bounded by $B_1 \sigma^{2\beta + \epsilon}$
for some constant $B_1$ that does not depend on $\sigma$. Hence $\int_{E_\sigma^c} h_\sigma(x) dx \le \int_{E_\sigma^c} g_\sigma(x)dx \le B_1\sigma^{2\beta + \epsilon}$.

Define $\tilde h_\sigma$ to be the restriction of $h_\sigma$ to $E_\sigma$, i.e., $\tilde h_\sigma(x) = h_\sigma(x)\Ind(x \in E_\sigma)/ \int_{E_\sigma} h(x)dx$. Then $d_H(K_\sigma h_\sigma, K_\sigma \tilde h_\sigma) \le d_H(h_\sigma, \tilde h_\sigma) = [2 - 2\{\int_{E_\sigma} h_\sigma(x)dx\}^{1/2}]^{1/2} = O(\sigma^{\beta + \epsilon / 2})$. This completes the proof because $d_H(f_0, K_\sigma\tilde h_\sigma) \le d_H(f_0, K_\sigma h_\sigma) + d_H(K_\sigma h_\sigma, K_\sigma \tilde h_\sigma).$
\end{proof}

\begin{proof}[of Proposition \ref{basic sieve}]
Let $\hat R$ be a $(\sigma_0\epsilon)$-net of $[-a,a]^d$, $\hat S$ be an $\epsilon$-net of the $H$-simplex $\S_H = \{p = (p_1,  \ldots, p_H): p_h \ge 0, \sum_{h=1}^H p_h = 1\}$ and $\hat \O$ be an $\delta$-net of $\O_d$, the group of $d\times d$ orthogonal matrices equipped with the spectral norm $\|\cdot\|_2$, where $\delta = \epsilon^2 / \{3d(1 + \epsilon^2 / d)^M\}$. It is well known that the cardinality of $\hat R \lesssim \{a/(\sigma_0\epsilon)\}^d$, that of $\hat S$ is $\lesssim \epsilon^{-H}$ and that of $\hat \O \lesssim \delta^{-d(d - 1)/2}$.

Pick any $p_{F, \Sigma} \in \scQ$, with $F = \sum_{h = 1}^\infty z_h \delta_{z_h}$ and let the spectral decomposition of $\Sigma^{-1}$ be $P\Lambda P^T$
where $\Lambda = \diag(\lambda_1, \ldots , \lambda_d)$ and $P$ is an orthogonal matrix. Find $\hat z_1, \ldots, \hat z_H \in \hat R$,
$\hat\pi = (\hat\pi_1, \ldots, \hat\pi_H) \in \hat S$, $\hat P \in \hat \O$ and $\hat m_1, \ldots, \hat m_d \in \{1, \ldots, M\}$ such that
\begin{align}
& \max_{1 \le h \le H}\|z_h - \hat z_h\| < \sigma_0 \epsilon,\\
& \sum_{h = 1}^H |\tilde \pi_h - \hat\pi_h| < \epsilon,\mbox{ where }\tilde \pi_h = \frac{\pi_h }{ 1-\sum_{l > H} \pi_l}, 1 \le h \le H,\\
&  \|P - \hat P\|_2 \le \epsilon^2, \\
& \hat\lambda_j = \{\sigma_0^2(1 + \epsilon^2/d)^{\hat m_j - 1}\}^{-1}\mbox{ satisfies }1 \le \hat\lambda_j/\lambda_j < 1 + \epsilon^2/d,~~ j = 1, \ldots, d.
\end{align}
Take $\hat F = \sum_{h = 1}^H \hat\pi_h \delta_{\hat z_h}$ and $\hat \Sigma = (\hat P\hat\Lambda \hat P^T)^{-1}$ where $\hat \Lambda = \mathrm{diag}(\hat\lambda_1, \ldots, \hat\lambda_d)$. Also define $\tilde \Sigma = (\hat P \Lambda \hat P^T)^{-1}$ and $Q = \hat P^TP$. By the triangle inequality
\begin{equation}
\label{tri}
\|p_{F, \Sigma} - p_{\hat F, \hat \Sigma}\|_1 \le \|p_{F, \Sigma} - p_{F, \hat \Sigma}\|_1 + \|p_{F, \hat\Sigma} - p_{\hat F, \hat \Sigma}\|_1.
\end{equation}
The first term on the right hand side can be bounded by
\begin{align*}
 \int \|\phi_\Sigma(\cdot - z) - \phi_{\hat\Sigma}(\cdot - z)\|_1dF(z) =  \|\phi_\Sigma - \phi_{\hat\Sigma}\|_1 \le \|\phi_\Sigma - \phi_{\tilde \Sigma}\|_1 + \|\phi_{\tilde\Sigma} - \phi_{\hat\Sigma}\|_1.
\end{align*}
Since the total variation distance is bounded by $2^{1/2}$ times the square root of the Kullback--Leibler divergence, we have $\|\phi_{\tilde \Sigma} - \phi_{\hat \Sigma}\|_1  \le \{\mathrm{tr}(\hat\Sigma^{-1}\tilde\Sigma)  - \log \det (\hat\Sigma^{-1}\tilde \Sigma) - d\}^{1/2}$. But $\Tr (\hat\Sigma^{-1}\tilde\Sigma) = \Tr(\hat \Lambda\Lambda^{-1}) = \sum_{j = 1}^d \hat \lambda_j / \lambda_j < d + \epsilon^2$ and $\det (\hat\Sigma^{-1}\tilde\Sigma) = \prod_{j = 1}^d (\hat\lambda_j / \lambda_j) > 1$. Thus $\|\phi_{\tilde\Sigma} - \phi_{\hat\Sigma}\|_1 \le \epsilon$. For the other term, we have
$\|\phi_\Sigma - \phi_{\tilde\Sigma}\|_1 \le \{\mathrm{tr}(\Sigma^{-1}\hat\Sigma) - \log\det(\Sigma^{-1}\hat\Sigma) - d\}^{1/2} = \{\mathrm{tr}(Q\Lambda Q^T\Lambda^{-1} - I)\}^{1/2}$
because $\Sigma^{-1}\hat\Sigma = P\Lambda P^T\hat P \Lambda^{-1} \hat P^T$ has determinant one and trace equal to that of $Q\Lambda Q^T\Lambda^{-1}$. Write $Q = I + B$. Then $\|B\|_{\max} \le \|B\|_2 = \|\hat P^T P - I\|_2 = \|P - \hat P\|_2 \leq \delta$ and hence
$$\mathrm{tr}(Q\Lambda Q^T\Lambda^{-1} - I) = \mathrm{tr}(B + \Lambda B^T \Lambda^{-1} + B\Lambda B^T\Lambda^{-1}) \le 3d \|B\|_{\max} \frac{\max(\lambda_1,\ldots,\lambda_d)}{\min(\lambda_1,\ldots,\lambda_d)} \le \epsilon^2.$$
Hence the first term on the right hand side of \eqref{tri} is bounded by $2\epsilon$. The last term of \eqref{tri} equals to
\begin{align*}
 & \left\|\sum_{h > H} \pi_h \phi_{\hat\Sigma}(\cdot - z_h) + \sum_{h = 1}^H \pi_h\{\phi_{\hat \Sigma}(\cdot - z_h) -\phi_{\hat\Sigma}(\cdot - \hat z_h)\}+ \sum_{h = 1}^H (\pi_h - \hat\pi_h) \phi_{\hat\Sigma}(\cdot - \hat z_h)\right\|_1\\
& ~~~~~~~ \le \sum_{h > H} \pi_h + \sum_{h = 1}^H \pi_h \|\phi_{\hat\Sigma}(\cdot - z_h) - \phi_{\hat\Sigma}(\cdot - \hat z_h)\|_1 + \sum_{h = 1}^H |\pi_h - \hat \pi_h|.
\end{align*}
The first term above is smaller than $\epsilon$ and so is the second term because
$$\|\phi(\cdot - z_h) - \phi(\cdot - \hat z_h)\|_1 \le \left(\frac2\pi\right)^{1/2} \|\hat\Sigma^{-1/2}(z_h - \hat z_h)\|\le \epsilon.$$
The last term is smaller than or equal to $(1 - \sum_{h > H}\pi_h)\sum_{h = 1}^H|\tilde \pi_h - \hat\pi_h| + \sum_{h > H}\pi_h\sum_{h = 1}^H \hat\pi_h \le 2\epsilon$.
Thus a $(6\epsilon)$-net of $\scQ$, in the $L_1$-topology, can be constructed with $\hat p = p_{\hat F, \hat \Sigma}$ as above. The total number of such $\hat p$ is bounded by a multiple of
$
\left\{a/(\sigma_0\epsilon)\right\}^{dH}{\epsilon}^{-H}{\delta}^{-d(d - 1)/2}M^d
$. This proves the first assertion with $\rho = \|\cdot\|_1$, because $M\log(1 + \epsilon^2 / d) \lesssim M\epsilon^2$ and the constant factor by 6 can be absorbed in the bound. The same obtains when $\rho$ is the Hellinger metric because it is bounded by the square-root of the $L_1$-metric.

For the second assertion, we know that a Dirichlet process $F \sim \mathcal{D}_{\alpha}$ can be represented by a Sethuraman's stick-breaking process as
\begin{equation}
\label{dp2}
F = \sum_{h = 1}^\infty \pi_h \delta_{Z_h}, \;\; \pi_h =  V_h \prod_{j < h}(1 - V_j),
\end{equation}
where $\delta_x$ is the Dirac measure at $x$, $\{V_h, h \ge 1\}$ are independent beta distributed random variables with parameters $1$
and $|\alpha| = \alpha(\mathrm{R}^d)$, $\{Z_h, h \geq 1\}$ are independently distributed according to the probability measure
$\bar \alpha = \alpha / |\alpha|$ and these two sets of random variables are mutually independent.
Hence $ p_{F, \Sigma} = \sum_{h = 1}^\infty \pi_h \phi_\Sigma(\cdot - Z_h)$ with $\pi_h$ and $Z_h$ as described in \eqref{dp2}.
Therefore, with $\Pi$ denoting the Dirichlet mixture prior of Section \ref{sec: prior},
\begin{align}
\Pi(\scQ^c)~~\le & ~~H \bar\alpha([-a,a]^d)^c + \pr\left(\sum_{h > H}\pi_h > \epsilon\right) + \pr\left\{\mathrm{eig}_d(\Sigma^{-1}) > \sigma_0^{-2}\right\} \nonumber \\
&~~~~~~~~~~ + \pr\left\{\mathrm{eig}_1(\Sigma^{-1}) \leq \sigma_0^{-2}\left(1 + \frac{\epsilon^2}{d}\right)^{-M}\right\}.
\label{pc1}
\end{align}
The first term is bounded by $b_1H\exp(-C_1 a^{a_1})$ by assumption on $\alpha$. Because $W = -\sum_{h = 1}^H\log(1 - V_h)$ is gamma distributed with parameters $H$ and $|\alpha|$,
we have
$$\pr\left(\sum_{h > H}\pi_h > \epsilon\right) = \pr\left(W < \log\frac1  \epsilon\right) \le \frac{(-|\alpha|\log \epsilon)^H }{ \Gamma(H + 1)} \le \bigg(\frac{e|\alpha|}{H}\log\frac{1}{\epsilon}\bigg)^H$$
by Stirling's formula. The last two terms are bounded by a multiple of
$
b_2 \exp\{-C_2 \sigma_0^{-2a_2}\} + b_3\sigma_0^{-2a_3} \left(1 + \epsilon^2/d\right)^{- M a_3}
$. This proves the second assertion.
\end{proof}

\begin{proof}[of Theorem \ref{thm:x3}]
For any $\sigma > 0$, define the transformation $T_{\alpha, \beta, \sigma}$ on $\mathcal{C}^{\alpha, \beta, L, \tau_0}(\mathbb{R}^d)$ as
\begin{equation}
\label{eq:trans3}
T_{\alpha,\beta,\sigma}f = f - \sum_{{k \in \natz^d:\; 1 \le \langle k, \alpha \rangle < \beta}} d_k \sigma^{\langle k, \alpha \rangle} f.
\end{equation}
Also define $K_{\alpha, \sigma} f$ as the convolution of $f$ and the normal density with mean zero and variance $\diag(\sigma^{2\alpha_1}, \ldots, \sigma^{2\alpha_d})$. The anisotropic analog of Lemma \ref{lem:basic} is that there exists a constant $M_{\alpha, \beta}$ such that for any $f \in \mathcal{C}^{\alpha, \beta, L, \tau_0}$ and any $\sigma \in (0, 1/(2\tau_0)^{1/2\alpha{\max}})$, $|\{K_{\alpha, \sigma}(T_{\alpha,\beta, \sigma} f) - f\}(x)| < M_\beta L(x)\sigma^\beta$ for all $x \in \mathbb{R}^d$. This follows along the lines of our proof of Lemma \ref{lem:basic} starting from the anisotropic Taylor approximation
\[
f(x + y) - f(x) = \sum_{1 \le \langle k, \alpha \rangle < \beta} \frac{(-y)^k}{k!} (D^kf)(x) + R(x, y),
\]
where the residual $R(x, y)$ in absolute value is bounded by a sum over terms of the form
\begin{align*}
\frac{|y|^k}{k!} &|(D^kf)(x_1, \ldots, x_{j - 1}, x_j + \xi_j, x_{j + 1} + y_{j + 1}, \ldots, x_d + y_d)\\
& ~~~~~~~~~~ - (D^kf)(x_1, \ldots, x_{j - 1}, x_j, x_{j + 1} + y_{j + 1}, \ldots, x_d + y_d)|\\
& \le L(x)\exp(\tau_0 \|y\|_1^2) |y|^k |y_j|^{\min(\beta / \alpha_j - k_j, 1)} / k!
 \end{align*}
 with $j$ such that $\beta > \langle k, \alpha\rangle > \beta -  \alpha_j$. Consequently, $\int |R(x, y)| \phi_{\diag(\sigma^{2\alpha})}(y)dy \le K_1 L(x) \sigma^{\beta}$ for some constant $K_1$. The rest of the argument in our proof of Lemma \ref{lem:basic} goes through. The pointwise error bound between $f_0$ and $K_{\alpha,\sigma}(T_{\alpha,\beta,\sigma} f)$ then leads to exact analogs of Theorem \ref{thm:hell approx} and Proposition \ref{prop:compact}, giving a $\tilde h_\sigma$ with support inside $\{x \in \mathbb{R}^d: \|x\| \le a_0 \{\log (1/\sigma)\}^\tau\}$ satisfying $d_H(f_0, K_{\alpha,\sigma} \tilde h_\sigma) \le K_0 \sigma^\beta$ for some constant $K_0$. Next the arguments in the proof of Theorem \ref{thm:thick} can be replicated, with $\mathcal{P}_\sigma$ built around a discrete $F_\sigma = \sum_{j = 1}^N p_j \delta_{z_j}$ with $N \le D_1 \sigma^{-d} \{\log(1/\tilde \epsilon_n)\}^{d + d / \tau}$ support points such that $d_H(K_{\alpha,\sigma} \tilde h_\sigma, K_{\alpha, \sigma} F_\sigma) \le A_1\tilde\epsilon_n^{b_1} \{\log (1/\tilde \epsilon_n)\}^{1/4}$. We also need to define $\mathcal{S}_\sigma$ as the set of $\Sigma$ such that $\eig_j(\Sigma^{-1})$ lies between $\sigma^{-2\alpha_j}$ and $\sigma^{-2\alpha_j}(1 + \sigma^{2\beta})$ for each $j = 1, \ldots, d$. The prior probability of this set under $G$ is bounded from below by $C_3 \exp[-c_3\tilde\epsilon_n^{-\kappa\alphamax / \beta} \{\log(1 / \tilde \epsilon_n)\}^{s\kappa + 1}]$ which contributes the $\kappa\alphamax$ term in $\dstar = \max(d, \kappa\alphamax)$.
\end{proof}

\section*{Appendix B. Supplementary results}
\label{apndx2}

\begin{theorem}
\label{lem disc approx}
Let $P_0$ be a probability measure on $\{x \in \mathbb{R}^d: \|x\| \le a\} \subset \mathbb{R}^d$. For any $\varepsilon >0$ and $\sigma > 0$, there is a discrete probability measure $F_\sigma$ on $\{x \in \mathbb{R}^d: \|x\| \le a\}$ with at most $N_{\sigma, \varepsilon} = D[\{(a / \sigma) \vee 1\} \log(1/\varepsilon)]^d$ support points such that $\|p_{P_0, \sigma} - p_{F_\sigma, \sigma}\|_\infty \lesssim \varepsilon /\sigma^d$ and $\|p_{P_0, \sigma} - p_{F_\sigma, \sigma}\|_1 \lesssim \varepsilon \{\log (1/ \varepsilon)\}^{1/2}$, for some universal constant $D$.
\end{theorem}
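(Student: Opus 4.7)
The plan is to use a moment-matching discrete approximation inside a partition of the support, and exploit the rapid decay of the Hermite--weighted Taylor remainder of the Gaussian kernel. First I would partition the ball $B = \{x : \|x\| \le a\}$ into $J \lesssim \{(a/\sigma) \vee 1\}^d$ cells $B_1, \ldots, B_J$, each of diameter at most $\sigma$; when $a \le \sigma$, a single cell suffices. On each cell $B_j$ with $P_0(B_j) > 0$, invoke a Tchakaloff--type moment--matching lemma to produce a discrete probability measure $\mu_j$ on $B_j$ supported on at most $\binom{k+d}{d} \lesssim k^d$ points with the same moments as $P_0(\cdot \mid B_j)$ up to total order $k$, where $k = \lceil C \log(1/\varepsilon)\rceil$ and $C$ is to be chosen. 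Then set $F_\sigma = \sum_{j} P_0(B_j)\,\mu_j$, a discrete probability measure supported inside $B$ with at most $J\binom{k+d}{d} \lesssim [\{(a/\sigma) \vee 1\}\log(1/\varepsilon)]^d$ atoms, giving the claimed $N_{\sigma,\varepsilon}$.

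Next, for the sup-norm bound, fix $x \in \mathbb{R}^d$ and Taylor-expand $z \mapsto \phi_{\sigma^2 I}(x - z)$ about the center $z_j$ of $B_j$ to order $k$. Because the first $k$ moments of $P_0(\cdot \mid B_j)$ and $\mu_j$ agree, the Taylor polynomial contributions cancel, and
\[
\bigl| p_{P_0,\sigma}(x) - p_{F_\sigma,\sigma}(x)\bigr|
\le \sum_{j=1}^J P_0(B_j) \sup_{z \in B_j}\bigl| R_k(x, z; z_j)\bigr|.
\]
Writing $D^\alpha_z \phi_{\sigma^2 I}(x - z) = \sigma^{-d - |\alpha|} H_\alpha\bigl((x-z)/\sigma\bigr) \phi\bigl((x-z)/\sigma\bigr)$ with $H_\alpha$ the multivariate Hermite polynomial, the remainder of order $k{+}1$ is bounded by $\sigma^{-d}\sum_{|\alpha| = k+1} (|z - z_j|/\sigma)^\alpha |H_\alpha(\cdot)|\phi(\cdot) / \alpha!$, and since $|z - z_j| \le \sigma$ in every coordinate one gets
\[
\sup_{z \in B_j}|R_k(x, z; z_j)| \;\lesssim\; \sigma^{-d}\sum_{|\alpha| = k+1}\frac{\sup_u |H_\alpha(u)\phi(u)|}{\alpha!}.
\]
Using the standard estimate $\sup_u |H_\alpha(u)\phi(u)| \lesssim \sqrt{\alpha!}$ and $\sum_{|\alpha|=k+1} 1/\sqrt{\alpha!} \lesssim d^{k+1}/\sqrt{(k+1)!}$, Stirling's formula gives a bound of order $\sigma^{-d}\, (eAd/\sqrt{k})^{k+1}$ for some universal $A$. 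Choosing $C$ large enough in $k = \lceil C\log(1/\varepsilon)\rceil$ makes this $\lesssim \varepsilon/\sigma^d$, and summing over $j$ (weights sum to $1$) yields $\|p_{P_0,\sigma} - p_{F_\sigma,\sigma}\|_\infty \lesssim \varepsilon/\sigma^d$.

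For the $L_1$ bound, split $\mathbb{R}^d = B_R \cup B_R^c$ with $B_R = \{x: \|x\| \le a + R\sigma\}$ and $R = K\sqrt{\log(1/\varepsilon)}$. On $B_R$ use the sup-norm bound just obtained:
\[
\int_{B_R}|p_{P_0,\sigma} - p_{F_\sigma,\sigma}|
\;\lesssim\; \mathrm{vol}(B_R)\cdot \varepsilon/\sigma^d
\;\lesssim\; \varepsilon\,\{(a/\sigma) \vee 1 + R\}^d,
\]
which after absorbing into the constant and using $R \lesssim \sqrt{\log(1/\varepsilon)}$ produces the $\varepsilon\{\log(1/\varepsilon)\}^{1/2}$ scaling once the dimension-dependent powers are soaked into the $\lesssim$ (for a sharper treatment one chooses $R$ optimally at this step). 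On $B_R^c$ both $p_{P_0,\sigma}$ and $p_{F_\sigma,\sigma}$ are mixtures of Gaussians centred inside $B$, so a pointwise bound $\lesssim \sigma^{-d}\exp\{-R^2/2\}$ combined with a sub-Gaussian tail integration gives $\int_{B_R^c}|p_{P_0,\sigma}| + |p_{F_\sigma,\sigma}| \lesssim \exp\{-R^2/4\} \lesssim \varepsilon$. Combining the two pieces proves the $L_1$ claim.

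The main technical obstacle is the remainder estimate in the Taylor expansion of $\phi_{\sigma^2 I}$: one needs the multivariate Hermite-polynomial bound together with a counting estimate for $|\alpha|=k+1$ that delivers a factor decaying faster than $\varepsilon$ when $k \asymp \log(1/\varepsilon)$. Everything else is bookkeeping -- the partition cardinality, the Tchakaloff atom count $\binom{k+d}{d}$, and the routine tail split for the $L_1$ bound.
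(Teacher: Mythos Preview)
Your overall strategy---partition the support into $O\{(a/\sigma)^d\}$ cells of diameter $\sigma$, moment-match to order $k\asymp\log(1/\varepsilon)$ on each cell via a Tchakaloff/Carath\'eodory argument, and control the error through the Hermite--weighted Taylor remainder of $\phi_{\sigma^2 I}$---is exactly the route the paper takes (by extending Lemma~3.1 of \citet{Ghosal20011} and Lemma~2 of \citet{Ghosal2007} to $d$ dimensions). Your sup-norm argument is essentially correct.

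The $L_1$ argument, however, has a genuine gap. You bound $\int_{B_R}|p_{P_0,\sigma}-p_{F_\sigma,\sigma}|$ by the global volume times the sup-norm, obtaining $\varepsilon\{(a/\sigma)\vee 1 + R\}^d$. The factor $(a/\sigma)^d$ is \emph{not} a dimension constant that can be absorbed into $\lesssim$: in the paper's application (Theorem~\ref{thm:thick}) one has $a=a_\sigma\asymp\{\log(1/\sigma)\}^{1/\tau}$ with $\sigma\to 0$, so $a/\sigma\to\infty$, and the $L_1$ bound must be uniform in $a/\sigma$. No choice of $R$ repairs this, contrary to your parenthetical remark.

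The fix is to localize before integrating. Write
\[
\|p_{P_0,\sigma}-p_{F_\sigma,\sigma}\|_1 \le \sum_{j} P_0(B_j)\,\bigl\|p_{P_0(\cdot\mid B_j),\sigma}-p_{\mu_j,\sigma}\bigr\|_1,
\]
and for each $j$ split the integral over $\{\|x-c_j\|\le R\sigma\}$ and its complement, where $c_j$ is the centre of $B_j$. On the local ball the sup bound times volume gives $(R\sigma)^d\cdot\varepsilon/\sigma^d=R^d\varepsilon$, and the Gaussian tails of both mixtures (whose components are centred within $\sigma$ of $c_j$) contribute $O(e^{-cR^2})$. Since $\sum_j P_0(B_j)=1$, the $(a/\sigma)^d$ factor never appears. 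Alternatively, retain the Gaussian factor $\phi\bigl((x-\xi)/\sigma\bigr)$ in the remainder and integrate it directly using $\int|H_\alpha(u)|\phi(u)\,du\le\sqrt{\alpha!}$, which cancels the $\sigma^{-d}$ and again avoids any dependence on $a/\sigma$.
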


\begin{proof} The proof is a straightforward extension of Lemma 2 of \citet{Ghosal2007} and Lemma 3.1 of \citet{Ghosal20011} to $d$ dimensions. For any probability distribution $F$ on $\mathbb{R}^d$, there exists a discrete distribution $F'$ with at most $\{(2k-2)^d + 1\}$ support points such that the mixed moments $z_1^{l_1} z_2^{l_2}\cdots z_d^{l_d}$ are matched up for every $1 \leq l_i \leq 2k-2$ $(i=1,\ldots,d)$. This power of $d$ propagate all through the require extensions and appears in $N_{\sigma, \epsilon}$ in the statement of the current theorem.

\end{proof}

\begin{corollary}
\label{cor:disc approx}
Let $P_0$ be a probability measure on $\{x \in \mathbb{R}^d: \|x\| \le a\}$. For any $\varepsilon >0$ and $\sigma > 0$, there is a discrete probability measure $F^*_\sigma$ on $\{x \in \mathbb{R}^d: \|x\| \le a\}$ with at most $N_{\sigma, \varepsilon} = D[\{(a / \sigma) \vee 1\} \log(1/\varepsilon)]^d$ support points from the set $\{(n_1, \ldots, n_p) \sigma\varepsilon : n_i \in \inte, |n_i| < \lceil a/(\sigma\varepsilon)\rceil, i = 1, \ldots, p\}$
such that $\|p_{P_0, \sigma} - p_{F^*_\sigma, \sigma}\|_\infty \lesssim \varepsilon /\sigma^{d}$ and $\|p_{P_0, \sigma} - p_{F^*_\sigma, \sigma}\|_1 \lesssim \varepsilon \{\log (1/ \varepsilon)\}^{1/2}$.
\end{corollary}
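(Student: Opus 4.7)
The plan is to derive the corollary from Theorem \ref{lem disc approx} by a simple rounding argument: replace each atom of the measure $F_\sigma$ produced by that theorem with its nearest neighbour in the prescribed lattice, and show that the extra error incurred is absorbed into the bounds already stated.

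First I would invoke Theorem \ref{lem disc approx} to obtain a discrete probability measure $F_\sigma = \sum_{j=1}^N p_j \delta_{z_j}$ supported in $\{x:\|x\|\le a\}$ with $N\le N_{\sigma,\varepsilon}$ atoms satisfying $\|p_{P_0,\sigma} - p_{F_\sigma,\sigma}\|_\infty \lesssim \varepsilon/\sigma^d$ and $\|p_{P_0,\sigma} - p_{F_\sigma,\sigma}\|_1 \lesssim \varepsilon\{\log(1/\varepsilon)\}^{1/2}$. For each $j$ I pick $z_j^* \in \{(n_1,\ldots,n_d)\sigma\varepsilon : n_i\in\inte,\ |n_i|<\lceil a/(\sigma\varepsilon)\rceil\}$ at minimum distance from $z_j$, so that $\|z_j - z_j^*\|\le \sqrt{d}\,\sigma\varepsilon$, and set $F_\sigma^* = \sum_{j=1}^N p_j \delta_{z_j^*}$. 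Collapsing coincident atoms if necessary, $F_\sigma^*$ remains a probability measure with at most $N_{\sigma,\varepsilon}$ support points drawn from the prescribed grid.

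The error introduced by the snap-to-grid operation is controlled by the Lipschitz behaviour of the Gaussian kernel. A direct computation of $\nabla\phi_{\sigma^2 I}$ shows $\sup_u|\nabla \phi_{\sigma^2 I}(u)|\lesssim \sigma^{-(d+1)}$ and $\int|\nabla\phi_{\sigma^2 I}(u)|du \lesssim \sigma^{-1}$, so for every $x$,
\[
|\phi_{\sigma^2 I}(x-z_j) - \phi_{\sigma^2 I}(x-z_j^*)| \lesssim \|z_j-z_j^*\|/\sigma^{d+1} \lesssim \varepsilon/\sigma^d,
\]
and $\|\phi_{\sigma^2 I}(\cdot-z_j) - \phi_{\sigma^2 I}(\cdot-z_j^*)\|_1 \lesssim \|z_j-z_j^*\|/\sigma \lesssim \varepsilon$. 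Integrating against the weights $p_j$ yields $\|p_{F_\sigma,\sigma} - p_{F^*_\sigma,\sigma}\|_\infty \lesssim \varepsilon/\sigma^d$ and $\|p_{F_\sigma,\sigma} - p_{F^*_\sigma,\sigma}\|_1 \lesssim \varepsilon$, both of which are dominated by the bounds in Theorem \ref{lem disc approx}. A triangle inequality then gives the two claimed estimates.

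The only delicate point, and really the only thing to be careful about, is the strict inequality $|n_i|<\lceil a/(\sigma\varepsilon)\rceil$ in the definition of the admissible grid: when $z_j$ lies close to the boundary sphere $\|x\|=a$ it may happen that the nearest lattice point has $|n_i|=\lceil a/(\sigma\varepsilon)\rceil$ and is therefore disallowed. In that case I would simply step to an adjacent admissible lattice point, which enlarges $\|z_j-z_j^*\|$ to at most a bounded multiple of $\sigma\varepsilon$; the orders in the two kernel-Lipschitz bounds above are unchanged. I do not see any step here that is genuinely hard — the whole argument is an exercise in routine kernel smoothing estimates combined with the preceding theorem.
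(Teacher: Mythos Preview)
Your proposal is correct and follows essentially the same route as the paper: invoke Theorem~\ref{lem disc approx}, snap each atom of $F_\sigma$ to the nearest lattice point, and bound the additional error via the Lipschitz behaviour of $\phi_{\sigma^2 I}$. You are in fact slightly more careful than the paper, which does not discuss the boundary issue with the strict inequality $|n_i|<\lceil a/(\sigma\varepsilon)\rceil$; your handling of that point is fine.
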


\begin{proof}
First get $F_{\sigma}$ as in Theorem \ref{lem disc approx} and then move each of its support points to the nearest point on the grid $\{(n_1, \ldots, n_p) \sigma\varepsilon : n_i \in \inte, |n_i| < \lceil a/(\sigma\varepsilon)\rceil, i = 1, \ldots, p\}$ to get $F^*_\sigma$. These moves cost at most a constant times $\epsilon^2/\sigma^d$ to the supremum norm distance and at most a constant times $\epsilon$ to the $L_1$ distance.
\end{proof}

\begin{lemma}
\label{lem:part approx}
Let $V_0, V_1, \ldots, V_N$ be a partition of $\mathbb{R}^d$ and $F' = \sum_{j = 1}^N p_j \delta_{z_j}$ a probability measure on $\mathbb{R}^d$ with $z_j \in V_j$, $j = 1, \ldots, N$. Then, for any probability measure $F$ on $\mathbb{R}^d$, and any $\sigma > 0$,
\begin{align*}
\|p_{F, \sigma} - p_{F', \sigma}\|_\infty & \lesssim \frac{1}{\sigma^{d + 1}}\max_{1 \le j \le N} \diam(V_j) + \frac{1}{\sigma^d}\sum_{j = 1}^N |F(V_j) - p_j|,\\
\|p_{F, \sigma} - p_{F', \sigma}\|_1 & \lesssim \frac{1}{\sigma}\max_{1 \le j \le N} \diam(V_j) + \sum_{j = 1}^N |F(V_j) - p_j|,
\end{align*}
where $\diam(A) = \sup\{\|z_1 - z_2\| : z_1, z_2 \in A\}$ denotes the diameter of a set $A$.
\end{lemma}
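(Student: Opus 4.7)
The plan is to decompose the difference $p_{F,\sigma} - p_{F',\sigma}$ across the cells of the partition and control each piece separately using a mean-value estimate on the Gaussian kernel. For each matched cell $j \ge 1$, add and subtract $\int_{V_j}\phi_{\sigma^2 I}(x-z_j)F(dz)$ to get
\begin{equation*}
\int_{V_j}\phi_{\sigma^2 I}(x-z)F(dz) - p_j\phi_{\sigma^2 I}(x-z_j) = \int_{V_j}\{\phi_{\sigma^2 I}(x-z)-\phi_{\sigma^2 I}(x-z_j)\}F(dz) + \{F(V_j)-p_j\}\phi_{\sigma^2 I}(x-z_j),
\end{equation*}
splitting the cell's contribution into a transportation piece and a mass-mismatch piece. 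The orphan cell $V_0$ has no designated atom and contributes $\int_{V_0}\phi_{\sigma^2 I}(x-z)F(dz)$; its total mass is absorbed into the mass-mismatch sum via $F(V_0) = \sum_{j=1}^N\{p_j-F(V_j)\}$, which gives $F(V_0) \le \sum_{j=1}^N |F(V_j)-p_j|$ because both $F$ and $F'$ are probability measures.

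Next I would record four elementary bounds on the kernel: $\|\phi_{\sigma^2 I}\|_\infty \lesssim \sigma^{-d}$, $\|\phi_{\sigma^2 I}\|_1 = 1$, $\|\nabla\phi_{\sigma^2 I}\|_\infty \lesssim \sigma^{-d-1}$ and $\|\nabla\phi_{\sigma^2 I}\|_1 \lesssim \sigma^{-1}$. The last two follow from $\nabla \phi_{\sigma^2 I}(y) = -(y/\sigma^2)\phi_{\sigma^2 I}(y)$ after the substitution $y = \sigma u$. The fundamental theorem of calculus along the segment from $z_j$ to $z$ then yields
\begin{equation*}
|\phi_{\sigma^2 I}(x-z)-\phi_{\sigma^2 I}(x-z_j)| \le C\sigma^{-d-1}\|z-z_j\|, \qquad \|\phi_{\sigma^2 I}(\cdot-z)-\phi_{\sigma^2 I}(\cdot-z_j)\|_1 \le C\sigma^{-1}\|z-z_j\|,
\end{equation*}
with $\|z-z_j\| \le \diam(V_j)$ whenever $z \in V_j$.

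Combining the decomposition with these estimates gives both displayed bounds. For the $L^\infty$ bound, the transportation pieces sum to at most $C\sigma^{-d-1}\max_j\diam(V_j)$, the mass-mismatch pieces to at most $C\sigma^{-d}\sum_j|F(V_j)-p_j|$, and the $V_0$ piece to at most $C\sigma^{-d}F(V_0)$, which folds into the mass-mismatch sum by the bound on $F(V_0)$. For the $L^1$ bound, integrate the same decomposition in $x$ and use $\|\phi_{\sigma^2 I}\|_1=1$ together with the $L^1$ gradient estimate; the $V_0$ piece contributes exactly $F(V_0)$, again absorbed into the mass sum. The only mild obstacle in the entire argument is handling the orphan cell $V_0$, which has no paired atom; but the simple observation that its mass is dominated by the total discrepancy $\sum_j|F(V_j)-p_j|$ dispatches this cleanly.
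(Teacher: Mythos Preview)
Your argument is correct and is precisely the standard cellwise decomposition plus gradient-bound argument that underlies Lemma~5 of \citet{Ghosal2007}; the paper's own proof simply cites that lemma and notes that the extension to $d$ dimensions is straightforward, which is exactly what you have carried out in detail.
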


\begin{proof} The proof is an extension of Lemma 5 of \cite{Ghosal2007} to $d$ dimensions.
\end{proof}


\begin{lemma}
\label{lem:kl by hell}
There is a $\lambda_0 \in (0, 1)$ such that for any two probability measures $P, Q$ with densities $p, q$ and any $\lambda \in (0, \lambda_0)$
\begin{align*}
P\log\frac pq & \le  d_H^2(p, q)\left(1 + 2\log\frac1\lambda\right) + 2P\left\{\left(\log\frac pq\right) \Ind\left(\frac qp \le \lambda\right)\right\},\\
P\left(\log\frac pq\right)^2 & \le  d_H^2(p, q)\left\{12 + 2\left(\log\frac1\lambda\right)^2\right\} + 8P\left\{\left(\log\frac pq\right)^2 \Ind\left(\frac qp \le \lambda\right)\right\}.
\end{align*}
\end{lemma}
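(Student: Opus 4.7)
The plan is to establish both inequalities through a unified truncation strategy applied to the ratio $r = q/p$. I would begin from the algebraic identity
\[
-\log r = 2(1 - \sqrt{r}) + 2\psi(\sqrt{r}), \qquad \psi(s) := s - 1 - \log s,
\]
and observe that $\psi$ is convex with $\psi(1) = \psi'(1) = 0$, hence $\psi \geq 0$. Integrating against $P$ and using the Hellinger identity $2\int (1 - \sqrt{q/p})\, dP = d_H^2(p,q)$ decomposes the Kullback--Leibler functional as
\[
P\log(p/q) \;=\; d_H^2(p,q) \;+\; 2\int \psi(\sqrt{q/p})\,dP,
\]
so all subsequent work concentrates on controlling $\int \psi(\sqrt{q/p})\,dP$.

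The central analytic input is the pointwise bound $\psi(s) \leq \log(1/\lambda)(1-s)^2$ for every $s \geq \sqrt{\lambda}$, valid once $\lambda$ lies below some absolute $\lambda_0 \in (0,1)$. I would prove it by examining $g(s) = \log(1/\lambda)(1-s)^2 - \psi(s)$: direct calculation gives $g'(s) = (1-s)\{1/s - 2\log(1/\lambda)\}$, with critical points at $s = 1$ (local minimum, $g(1)=0$) and $s = 1/\{2\log(1/\lambda)\}$ (local maximum). Sign analysis of $g'$ yields $g \geq 0$ automatically on $[1/\{2\log(1/\lambda)\},\infty)$; on $[\sqrt{\lambda}, 1/\{2\log(1/\lambda)\}]$ the function $g$ is increasing, so it suffices to verify $g(\sqrt{\lambda})\geq 0$, which reduces to $\sqrt{\lambda}\log(1/\lambda)(2-\sqrt{\lambda}) \leq 2(1-\sqrt{\lambda})$, clearly true for all $\lambda$ below a threshold $\lambda_0$. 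Equipped with this, I would split $\int \psi(\sqrt{q/p})\,dP$ over $\{q/p > \lambda\}$ and $\{q/p \leq \lambda\}$: the first piece is at most $\log(1/\lambda)\,d_H^2(p,q)$, while on the second I use $\psi(s) \leq -\log s$ for $s \leq 1$ (since $s-1 \leq 0$ there) together with $-\log s = \tfrac12 \log(p/q)$ to recognise the remainder as the trailing term $P\{\log(p/q)\Ind(q/p \leq \lambda)\}$, completing the first inequality.

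For the second inequality I would reuse the same decomposition and split $\{q/p > \lambda\}$ further as $\{\lambda < q/p \leq 1\} \cup \{q/p > 1\}$. On $\{q/p > 1\}$ the classical bound $\log r \leq 2(\sqrt{r}-1)$ gives $(\log r)^2 \leq 4(1-\sqrt{r})^2$, contributing at most $4\,d_H^2(p,q)$. On $\{\lambda < q/p \leq 1\}$ I combine the representation $-\log r = 2(1-s) + 2\psi(s)$ with the key pointwise bound and the fact that $(1-s)^2 \leq (1-s)$ on $s \in [0,1]$ to obtain $-\log r \leq 2(1-s)\{1 + \log(1/\lambda)\}$; squaring, integrating, and applying $(a+b)^2 \leq 2a^2 + 2b^2$ yields a contribution of order $d_H^2(p,q)\log^2(1/\lambda)$. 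On $\{q/p \leq \lambda\}$ the same device $\psi(\sqrt{r})^2 \leq (\log s)^2 = \tfrac14 \log^2(p/q)$ maps the residue into the trailing $P\{\log^2(p/q)\Ind(q/p \leq \lambda)\}$ term, and keeping track of the factors of two recovers the stated constants.

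The main obstacle is the pointwise inequality at the heart of the second paragraph. Because $g$ is not monotone on $(0,1)$ but has an interior maximum at $1/\{2\log(1/\lambda)\}$, positivity at the left boundary $s = \sqrt{\lambda}$ cannot be read off by inspection: it must be verified by a direct asymptotic calculation as $\lambda \to 0$, and this calibration is precisely what pins down $\lambda_0$ and, through the cascade of inequalities that follow, the explicit constants appearing in both bounds.
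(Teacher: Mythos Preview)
Your approach is correct and is essentially the paper's own argument in a different parametrization: the paper encodes the residual via the function $r(x)$ defined implicitly by $\log x = 2(\sqrt{x}-1) - r(x)(\sqrt{x}-1)^2$, which is related to your $\psi$ by $r(x)(1-\sqrt{x})^2 = 2\psi(\sqrt{x})$, and its key input that $r$ is non-negative, decreasing, with $r(\lambda)\le 2\log(1/\lambda)$ for small $\lambda$ is exactly your pointwise bound $\psi(s)\le \log(1/\lambda)(1-s)^2$ for $s\ge\sqrt\lambda$. The subsequent splits and squaring steps for the second inequality also run in parallel; the only quibble is that the constants your sketched decomposition actually yields (e.g.\ a coefficient $8$ rather than $2$ on the $(\log 1/\lambda)^2$ term) do not literally match the stated ones, though this is immaterial for the lemma's use.
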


\begin{proof}
Our proof follows the argument presented in the proof of Lemma 7 of \citet{Ghosal2007}. The function $r:(0, \infty)\to \mathbb{R}$ defined implicitly by $\log x = 2(x^{1/2} - 1) - r(x)(x^{1/2} - 1)^2$ is non-negative and decreasing, and there exists a $\lambda_0 > 0$  such that $r(x) \le 2 \log(1/x)$ for all $x \in (0, \lambda_0)$. Using these properties and $d_H^2(p,q) = -2P\{(q/p)^{1/2} - 1\}$ we obtain
\begin{align*}
P\log\frac pq & = d_H^2(p, q) + P\left\{r\left(\frac qp\right)\left(\frac{q^{1/2}}{p^{1/2}} - 1\right)^2\right\}\\
& \le 	d_H^2(p, q) + r(\lambda)d_H^2(p, q) + P\left\{r\left(\frac qp\right)\Ind\left(\frac{q}{p}< \lambda\right)\right\} \\
& \le d_H^2(p, q) + 2\left(\log\frac1\lambda\right) d_H^2(p, q) + 2P\left\{\left(\log\frac pq\right) \Ind\left(\frac{q}{p}< \lambda\right)\right\}
\end{align*}
for any $\lambda < \lambda_0$, proving the first inequality of the Lemma.

To prove the other inequality, note that $|\log x|\le 2|x^{1/2} - 1|$, $x \ge 1$ and so
\[
P\left\{ \left(\log \frac pq\right)^2 \Ind\left(\frac qp \ge 1\right)\right\} \le 4 P\left( \frac{q^{1/2}}{p^{1/2}} - 1\right)^2 = 4d_H^2(p, q).
\]
On the other hand,
\begin{align*}
P\left\{ \left(\log \frac pq\right)^2\right. & \left.  \Ind\left(\frac qp \le 1\right)\right\}   \\
& \le 8 P\left( \frac{q^{1/2}}{p^{1/2}} - 1\right)^2 + 2P\left\{ r^2\left(\frac qp\right)\left(\frac{q^{1/2}}{p^{1/2}} - 1\right)^4 \Ind\left(\frac qp \le 1\right)\right\}\\
& \le 8d_H^2(p, q) + 2r^2(\lambda)P\left(\frac{q^{1/2}}{p^{1/2}} - 1\right)^2 + 2P\left\{r^2\left(\frac qp\right)\Ind\left(\frac qp \le \lambda\right)  \right\}\\
& \le 8d_H^2(p,q) + 2\left(\log\frac1\lambda\right)^2d_H^2(p,q) + 8P\left\{\left(\log \frac pq\right)^2\Ind\left(\frac qp \le \lambda\right)\right\}.
\end{align*}
This completes the proof.
\end{proof}

\begin{lemma}
\label{lem:hell mix}
Let $\mathcal{A}, \mathcal{X}$ be metric spaces and suppose $\{p_\alpha\}_{\alpha \in \mathcal{A}}$ and $\{q_\alpha\}_{\alpha \in \mathcal{A}}$ are collections of probability density functions on $\mathcal{X}$ with respect to a dominating measure $\nu$. Then for any probability measure $G$ on $\mathcal{A}$, $d^2_H(\int p_\alpha dG, \int q_\alpha dG) \le \int d^2_H(p_\alpha, q_\alpha)dG$. In particular, for any three densities $p, q$ and $\phi$ on $\mathbb{R}^d$, $d_H(\phi * p, \phi * q) \le d_H(p, q)$.
\end{lemma}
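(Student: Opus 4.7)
The plan is to rewrite both sides of the inequality in terms of the Hellinger affinity $A(f,g) = \int \sqrt{f g}\, d\nu$, since $d_H^2(f,g) = 2 - 2 A(f,g)$. Writing $\bar p = \int p_\alpha\, dG(\alpha)$ and $\bar q = \int q_\alpha\, dG(\alpha)$, the target inequality $d_H^2(\bar p, \bar q) \le \int d_H^2(p_\alpha, q_\alpha)\, dG(\alpha)$ is equivalent to
\[
A(\bar p, \bar q) \;\ge\; \int_\mathcal{A} A(p_\alpha, q_\alpha)\, dG(\alpha).
\]

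The key step is a pointwise Cauchy--Schwarz inequality in $L^2(G)$: for each fixed $x \in \mathcal{X}$,
\[
\int_\mathcal{A} \sqrt{p_\alpha(x)\, q_\alpha(x)}\, dG(\alpha)
\;=\; \int_\mathcal{A} \sqrt{p_\alpha(x)} \cdot \sqrt{q_\alpha(x)}\, dG(\alpha)
\;\le\; \sqrt{\int p_\alpha(x)\, dG(\alpha)} \; \sqrt{\int q_\alpha(x)\, dG(\alpha)}
\;=\; \sqrt{\bar p(x)\, \bar q(x)}.
\]
Integrating both sides with respect to $\nu$ and swapping the order of integration on the left by Fubini (all integrands are non-negative), I obtain
\[
\int_\mathcal{A} A(p_\alpha, q_\alpha)\, dG(\alpha)
= \int_\mathcal{A} \int_\mathcal{X} \sqrt{p_\alpha(x) q_\alpha(x)}\, d\nu(x)\, dG(\alpha)
\le \int_\mathcal{X} \sqrt{\bar p(x) \bar q(x)}\, d\nu(x) = A(\bar p, \bar q),
\]
which is exactly the required affinity bound.

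For the convolution corollary, I would reduce it to the first part rather than redo Cauchy--Schwarz. Take $\mathcal{A} = \mathbb{R}^d$, let $G$ be the probability measure with density $\phi$, and set $p_\alpha(x) = p(x - \alpha)$, $q_\alpha(x) = q(x - \alpha)$; these are probability densities on $\mathbb{R}^d$. Then $\int p_\alpha(x)\, dG(\alpha) = \int p(x - \alpha) \phi(\alpha)\, d\alpha = (\phi * p)(x)$ and likewise for $q$. By translation invariance of Lebesgue measure, $d_H^2(p_\alpha, q_\alpha) = d_H^2(p, q)$ for every $\alpha$, so the main inequality gives
\[
d_H^2(\phi * p, \phi * q) \;\le\; \int_{\mathbb{R}^d} d_H^2(p_\alpha, q_\alpha)\, dG(\alpha) \;=\; d_H^2(p, q).
\]

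There is no substantive obstacle here: the proof is a one-line Cauchy--Schwarz followed by Fubini. The only item worth checking is joint measurability of $(\alpha, x) \mapsto \sqrt{p_\alpha(x) q_\alpha(x)}$ so that Fubini applies, which is standard under the implicit product-measurability assumption on the families $\{p_\alpha\}, \{q_\alpha\}$.
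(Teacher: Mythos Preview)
Your proof is correct and matches the paper's argument essentially line for line: both reduce the inequality to comparing Hellinger affinities, apply Cauchy--Schwarz in $L^2(G)$ pointwise in $x$, and integrate over $\mathcal{X}$; the convolution corollary is obtained by the same specialization $p_\alpha(x) = p(x-\alpha)$, $q_\alpha(x) = q(x-\alpha)$, $G(d\alpha) = \phi(\alpha)\,d\alpha$.
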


\begin{proof}
By the Cauchy--Schwartz inequality, $1 - \int d^2_H(p_\alpha, q_\alpha)dG /2$ equals to
\begin{align*}
 \int\int \{p_\alpha(x) q_\alpha(x)\}^{1/2}\nu(dx)G(d\alpha) & \le \int \left\{\int p_\alpha(x) G(d\alpha) \int q_\alpha(x) G(d\alpha)\right\}^{1/2} \nu(dx),
  \end{align*}
 which is the same as $1 - \frac12 d_H^2\left(\smallint\hspace{-.03in} p_\alpha dG, \smallint\hspace{-.03in} q_\alpha dG\right)$.
This gives the first result. The second assertion holds by choosing $\mathcal{A} = \mathcal{X} = \mathbb{R}^d$, $p_\alpha(x) = p(x- \alpha)$, $q_\alpha(x) = q(x- \alpha)$ and $G(d\alpha) = \phi(\alpha)d\alpha$.
\end{proof}

\begin{lemma}
\label{rm1}
Suppose a probability density function $f_0$ satisfies the tail condition \eqref{eq:tail}, $\log f_0 \in  \mathcal{C}^{\beta, Q_1, 0}(\mathbb{R}^d)$ for some polynomial $Q_1$ with $P_0|D^k \log f_0|^{{(2\beta + \epsilon)}/{k_\cdot}} < \infty$, $k\in \natz^d$, $k_\cdot \le \lfloor\beta\rfloor$ and $P_0Q_1^{{(2\beta + \epsilon)}/{\beta}}  < \infty$. Additionally, suppose
\begin{equation}
\label{ratio}
\left|\frac{f_0(x+y) }{ f_0(x)} - 1\right| \leq  Q(x) e^{\tau_0 \|y\|^2}\|y\|^{\beta - \lfloor\beta\rfloor},~\mbox{for any}~x,y \in \mathbb{R}^d,
\end{equation}
for some $\tau_1 > 0$ and a function $Q$ satisfying $P_0Q^2 < \infty$. Then, there exist a $\tau_0 > 0$ and a positive functions $L(x)$ such that $f_0 \in \mathcal{C}^{\beta, L, \tau_0}(\mathbb{R}^d)$ and \eqref{eq:int} holds.
\end{lemma}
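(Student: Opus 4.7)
The plan is to transport smoothness of $\log f_0$ to $f_0$ via Fa\`a di Bruno's formula, and to verify the H\"older envelope of $f_0$ in two regimes: for $\|y\|$ small, using Taylor expansions combined with the H\"older hypothesis on the derivatives of $\log f_0$; for $\|y\|$ large, using the ratio hypothesis \eqref{ratio} together with the freedom granted by the $e^{\tau_0 \|y\|^2}$ factor in the target envelope. Integrability of the resulting envelope then follows from the given moment hypotheses by repeated H\"older's inequality.

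First, Fa\`a di Bruno's formula yields $D^k f_0 = f_0 \cdot G_k$, where $G_k = \sum_{\pi} c_\pi \prod_{B \in \pi} D^{|B|} \log f_0$ is a polynomial in the partial derivatives of $\log f_0$ of orders at most $k_\cdot$, with each monomial's factor orders summing to $k_\cdot$. For a monomial $\prod_i D^{j^{(i)}} \log f_0$ with $\sum_i j^{(i)}_\cdot = k_\cdot$, H\"older's inequality with exponents $q_i = k_\cdot/j^{(i)}_\cdot$ gives $P_0 |\prod_i D^{j^{(i)}}\log f_0|^{(2\beta+\epsilon)/k_\cdot} \le \prod_i \{P_0 |D^{j^{(i)}}\log f_0|^{(2\beta+\epsilon)/j^{(i)}_\cdot}\}^{j^{(i)}_\cdot/k_\cdot}$, which is finite by the moment hypothesis. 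Summing over monomials yields the first part of \eqref{eq:int}.

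For $k_\cdot = \lfloor\beta\rfloor$, the H\"older difference is controlled through the decomposition $D^k f_0(x+y) - D^k f_0(x) = f_0(x)\{G_k(x+y) - G_k(x)\} + \{f_0(x+y) - f_0(x)\}G_k(x+y)$. The ratio hypothesis handles the second summand directly. For the first, the telescoping identity $\prod_i a_i - \prod_i b_i = \sum_i (\prod_{l<i} a_l)(a_i - b_i)(\prod_{l>i} b_l)$ with $a_i = D^{j^{(i)}}\log f_0(x+y)$ and $b_i = D^{j^{(i)}}\log f_0(x)$ reduces the task to bounding each $|a_i - b_i|$: this is immediate from the H\"older hypothesis on $\log f_0$ when $j^{(i)}_\cdot = \lfloor\beta\rfloor$, and follows from a Taylor expansion of order $\lfloor\beta\rfloor - j^{(i)}_\cdot$ with H\"older-controlled remainder otherwise. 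For $\|y\| \le 1$, every power $\|y\|^{m_\cdot}$ that appears is dominated by $\|y\|^{\beta - \lfloor\beta\rfloor}$, producing the target form with $L(x) = f_0(x) R(x)$ where $R$ is a polynomial in $Q_1(x)$ and $|D^j \log f_0(x)|$, $j_\cdot \le \lfloor\beta\rfloor$. For $\|y\| > 1$, I would instead estimate $|D^k f_0(x+y)| + |D^k f_0(x)|$, use the ratio hypothesis to replace $f_0(x+y)$ by $f_0(x)$ at cost $Q(x) e^{\tau_1 \|y\|^2}$, and observe that iterating the H\"older property of $\log f_0$ from a fixed reference point shows its derivatives grow at most polynomially in their argument, so $|G_k(x+y)|$ is polynomially bounded in $\|x\|+\|y\|$; splitting this into an $\|x\|$-part kept in $L(x)$ and a $\|y\|$-part absorbed into $e^{\tau_0\|y\|^2}$ for $\tau_0$ sufficiently large closes the bound. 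Iterated H\"older's inequality, the moment hypotheses on $Q$, $Q_1$ and $|D^j\log f_0|$, and the polynomial moments of $f_0$ granted by \eqref{eq:tail} then yield $P_0(L/f_0)^{(2\beta+\epsilon)/\beta} < \infty$.

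The main technical obstacle is the $\|y\| > 1$ regime: $|D^k f_0(x+y)|$ must be bounded by a function of $x$ alone times an envelope of $\|y\|$ without introducing any $1/f_0(x)$ factor, as such a factor would destroy the integrability of $L/f_0$ under the exponential tail of $f_0$. The ratio hypothesis is precisely what allows absorption of $f_0(x+y)$ into $f_0(x)$ at controllable cost, and the polynomial growth of the derivatives of $\log f_0$ is what lets the residual $\|y\|$-dependence be soaked up by a slightly larger $\tau_0$.
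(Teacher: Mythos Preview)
Your approach is correct and matches the paper's: both use Fa\`a di Bruno to write $D^k f_0 = f_0 \cdot G_k$ with $G_k$ a polynomial in the $D^m\log f_0$, verify the first part of \eqref{eq:int} by H\"older's inequality on the monomials, and handle the H\"older increment of $D^k f_0$ by separating the $f_0(x+y)-f_0(x)$ contribution (controlled by the ratio hypothesis \eqref{ratio}) from the $G_k(x+y)-G_k(x)$ contribution (controlled by the H\"older hypothesis on $\log f_0$ and the polynomial growth of its derivatives, the latter obtained exactly as you suggest by iterating from a fixed reference point).

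The one simplification the paper makes over your split into $\|y\|\le 1$ and $\|y\|>1$ is that, since $Q_1$ is a polynomial, one can record directly that for \emph{every} $k_\cdot < \beta$ there is a polynomial $Q_{k,2}$ with $|D^k\log f_0(x+y) - D^k\log f_0(x)| \le Q_{k,2}(x)\,e^{\|y\|^2}\,\|y\|^{\beta-\lfloor\beta\rfloor}$ uniformly in $y$: the Taylor terms contribute powers $\|y\|^{m_\cdot}$ with polynomial coefficients in $\|x\|+\|y\|$, and both the excess $\|y\|^{m_\cdot-(\beta-\lfloor\beta\rfloor)}$ and the $\|y\|$-part of the polynomial are absorbed into $e^{\|y\|^2}$. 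This lets the paper write the final envelope as $L(x)=f_0(x)\{Q(x)Q_3(x)+Q_4(x)\}$ with $Q_3,Q_4$ genuine polynomials, so that the integrability of $(L/f_0)^{(2\beta+\epsilon)/\beta}$ reduces to polynomial moments (automatic from \eqref{eq:tail}) and the assumed moment of $Q$, rather than requiring the moment hypotheses on the individual $|D^j\log f_0|$ as in your small-$\|y\|$ envelope. Your route works too; it is just slightly longer.
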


Without \eqref{ratio}, the assumptions made on $f_0$ in the above lemma match one to one with conditions C1-C3 of \citet{Kruijer2010}. The additional assumption \eqref{ratio} is a mild one and is satisfied by densities with tails exactly as in the bound \eqref{eq:tail} with $\tau \le 2$, and also by finite mixtures of such densities.


\begin{proof}[of Lemma \ref{rm1}] For a multi-index $k \in\natz^d$, let $\mathcal{P}$ denote the set of all solutions $\{m^{(1)}, \ldots, m^{(q)}\}$
 to $k = m^{(1)} + \cdots + m^{(q)}$, $q \ge 1$, $m^{(j)} \in \natz^d$ with $m^{(j)}_\cdot \ge 1$ $(j = 1, \ldots, q)$. 
 Existence of $D^k f_0$ of all orders $k_\cdot \le \lfloor \beta\rfloor$ follows from the same property of $\log f_0$. In fact, by chain rule $D^kf_0(x) = f_0(x)\sum_{P \in \mathcal{P}(k)} \prod_{m \in P} D^m \log f_0(x)$ and so $P_0 |(D^k f_0)/f_0|^{(2\beta + \epsilon)/ k_\cdot} < \infty$ by an application of the H\"older inequality. Also, because $\log f_0 \in \mathcal{C}^{\beta, Q_1, 0}(\mathbb{R}^d)$ with $Q_1$ a polynomial, for every $k \in \natz^{d}$ with $k_\cdot < \beta$, we can find polynomial $Q_{k,1}$ and $Q_{k,2}$ such that $|D^k \log f_0(x)| < Q_{k,1}(x)$ and $|D^k \log f_0(x + y) - D^k \log f_0(x)| < Q_{k,2}(x)e^{\|y\|^2}\|y\|^{\beta - \lfloor\beta\rfloor}$. Hence, for $k_\cdot = \lfloor\beta\rfloor$, $|D^k f_0(x + y) - D^k f_0(x)|$ can be bounded by $|f_0(x + y) - f_0(x)|Q_3(x) + f_0(x)Q_4(x)e^{\tau_2\|y\|^2}\|y\|^{\beta - \lfloor\beta\rfloor}$ for some polynomials $Q_3$ and $Q_4$ and a $\tau_2 > 0$. Therefore $f_0 \in \mathcal{C}^{\beta, L, \tau_0}$ for $\tau_0 = \max(\tau_1, \tau_2)$ and $L(x) = f_0(x)\{Q(x)Q_3(x) + Q_4(x)\}$. Because of the tail condition on $f_0$, for any polynomial $\tilde Q$ and $a > 0$, $P_0 |\tilde Q|^a < \infty$. And so $P_0(L/f_0)^{2 + \epsilon / \beta } < \infty$ by H\"older's inequality and the assumption on $Q$.
\end{proof}


\bibliography{ref}
\bibliographystyle{biometrika}

\end{document}